\let\oldcite\cite                                  
\newtheorem{thm}{Theorem}[section]
\newtheorem{cor}[thm]{Corollary}
\newtheorem{lem}[thm]{Lemma}
\newtheorem{prop}[thm]{Proposition}
\theoremstyle{definition}
\newtheorem{defn}[thm]{Definition}
\theoremstyle{remark}
\newtheorem{rem}[thm]{Remark}
\numberwithin{equation}{section} \theoremstyle{remark}
\newtheorem{ex}[thm]{Example}
\newcommand{\bbG}{\mathbb{G}}
\newcommand{\bbH}{\mathbb{H}}
\newcommand{\bbK}{\mathbb{K}}
\newcommand{\bbU}{\mathbb{U}}
\newcommand{\bbV}{\mathbb{V}}
\newcommand{\bbW}{\mathbb{W}}
\newcommand{\bbE}{\mathbb{E}}
\newcommand{\lra}{\longrightarrow}
\newcommand{\llra}[1]{\stackrel{#1}{\lra}}
\newcommand{\Int}{\operatorname{Int}}
\newcommand{\id}{\operatorname{id}}
\newcommand{\coker}{\operatorname{coker}}
\newcommand{\pr}{\operatorname{pr}}
\newcommand{\Lie}{\operatorname{Lie}}
\newcommand{\Hfib}{\operatorname{hfib}}
\newcommand{\TwoTerm}{\mathbf{2TermL_{\infty}}}
\newcommand{\TwoTermB}{\mathbf{2TermL_{\infty}^{\flat}}}
\newcommand{\LieXM}{\mathbf{LieXM}}
\newcommand{\LieAlgXM}{\mathbf{LieAlgXM}}
\newcommand{\Linf}{L_{\infty}}
\newcommand{\ang}[1]{\langle #1\rangle}
\def\smashedlongrightarrow{\setbox0=\hbox{$\longrightarrow$}\ht0=1pt\box0}
\def\risom{\buildrel\sim\over{\smashedlongrightarrow}}
\newcommand{\oux}[2]{\underset{#1}{\overset{#2}\times}}
\newcommand{\ouplus}[2]{\underset{#1}{\overset{#2}\oplus}}
\def\lie2{$2$-term $\Linf$-algebra}
\begin{document}

\title[Integrating morphisms of Lie $2$-algebras]
      {Integrating morphisms of Lie $2$-algebras}%

\author{Behrang Noohi}%

\begin{abstract}
   We show how to integrate a weak morphism of Lie algebra crossed-modules  to 
   a weak morphism of Lie 2-groups. To do so we develop a theory of butterflies for \lie2s. 
   In particular,  we obtain a new description of the bicategory of \lie2s. We use butterflies
   to  give a functorial construction of connected covers of Lie 2-groups. We also discuss the 
   notion of homotopy fiber of a morphism of \lie2s.
\end{abstract}
\maketitle
\section{Introduction}{\label{S:Intro}}

In this paper, we tackle two main problems in the Lie theory of 2-groups: 1) integrating 
weak morphisms of Lie 2-algebras to weak morphisms of Lie 2-groups; 2) functorial 
construction of connected covers of Lie 2-groups. As we will see, the latter plays
an important role in the solution
of the former. Let us explain (1) and (2) in detail and outline our solution to them. 

\medskip

\noindent{\em Problem (1).}
A weak morphism $f \: \bbH \to \bbG$ of  Lie 2-groups gives rise to a weak morphism 
of  Lie 2-algebras $\Lie f \: \Lie\bbH \to \Lie\bbG$. (If we regard $\Lie\bbH$
and $\Lie\bbG$ as \lie2s, $\Lie f$ is then a morphism of \lie2s in the sense of Definition 
\ref{D:morphismlie2alg}.) Problem (1) can be stated as follows: given a morphism
$F \: \Lie\bbH \to \Lie\bbG$ of Lie 2-algebras can we integrate it to a weak morphism 
$\Int F  \: \bbH \to \bbG$ of Lie 2-groups? 

We answer this question affirmatively by the following
theorem (see  Theorem \ref{T:bifunctor} for a more precise statement).

 \begin{thm}{\label{T:integration}}
   Let $\bbG$ and $\bbH$ be (strict) Lie 2-groups. Suppose that $\bbH$ is
   2-connected (Definition \ref{D:connected}). Then, to give a weak morphism 
   $f \: \bbH\to \bbG$ is equivalent to giving a morphism of Lie 2-algebras 
   $\Lie f \: \Lie\bbH \to \Lie\bbG$.  The same thing is true for 2-morphisms.
 \end{thm}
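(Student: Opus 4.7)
The plan is to reduce the theorem, via the butterfly descriptions of weak morphisms, to a comparison between Lie butterflies and Lie algebra butterflies, and then to integrate the latter by hand using Lie's theorems.

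By the butterfly calculus (classical in the group case, developed earlier in the paper in the Lie algebra case), a weak morphism $\bbH\to\bbG$ of Lie 2-groups is classified up to isomorphism by a Lie butterfly between the corresponding Lie crossed modules, and a morphism $\Lie\bbH\to\Lie\bbG$ of Lie 2-algebras is classified by a Lie algebra butterfly between the corresponding Lie algebra crossed modules. Differentiation at the identity assembles into a natural functor $\Lie$ from the groupoid of Lie butterflies to the groupoid of Lie algebra butterflies, obtained by passing to Lie algebras on the middle object and the four surrounding groups. The content of the theorem is that this functor is an equivalence of groupoids under the 2-connectedness assumption on $\bbH$.

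To construct an inverse $\Int$, I start with a Lie algebra butterfly with middle object $\mathfrak{e}$. By Lie's third theorem $\mathfrak{e}$ integrates to a simply connected Lie group $\tilde E$, and the short exact sequence $\mathfrak{g}_1 \hookrightarrow \mathfrak{e} \twoheadrightarrow \mathfrak{h}_0$ integrates, after quotienting $\tilde E$ by a discrete central subgroup, to a short exact sequence of Lie groups with the prescribed $G_1$ as subgroup and the prescribed $H_0$ as quotient. Here the 2-connectedness of $\bbH$ plays its essential role by ensuring that $H_0$ is recovered on the nose rather than as a proper cover. The remaining arms $\mathfrak{h}_1 \to \mathfrak{e}$ and $\mathfrak{e} \to \mathfrak{g}_0$ then integrate uniquely by Lie's second theorem, the first using once more the 2-connectedness of $\bbH$ to ensure $H_1$ is simply connected.

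The resulting diagram of Lie groups must still be verified to satisfy the butterfly axioms (equivariance of the $E$-action, commutativity of the two diagonals, and exactness); each axiom is inherited from its Lie algebra counterpart by the uniqueness clause of Lie's second theorem applied to the relevant actions, using connectedness of the middle group. The 2-morphism statement is handled by the same mechanism: a 2-morphism between butterflies reduces to an isomorphism of the middle Lie algebras, which integrates uniquely to an isomorphism of the middle Lie groups. The main obstacle throughout is the integration of the central short exact sequence so that the prescribed $G_1$ and $H_0$ are recovered exactly; this is precisely the geometric content of the 2-connectedness hypothesis on $\bbH$, after which the remainder of the argument is functorial bookkeeping.
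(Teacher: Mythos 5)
Your overall strategy coincides with the paper's proof of Theorem \ref{T:bifunctor}: pass to butterflies on both sides, integrate the middle Lie algebra $\mathfrak{e}$ to the simply connected group $\Int E$, quotient by a discrete subgroup to recover the NE--SW extension with kernel $G_1$ and quotient $H_0$, and then integrate the remaining arms. However, two steps are glossed in a way that hides the real work. First, the discrete subgroup you quotient by is $N=\ker(G\to G_1)$, where $G=\ker(\Int\sigma\: \Int E\to H_0)$; it is central in $G$ (which is the universal cover of $G_1$), but it is not obviously normal in $\Int E$, and without normality the quotient $(\Int E)/N$ does not exist as a group. The paper proves normality by comparing the conjugation action of $\Int E$ on $G$ with the action on $G_1$ obtained through $\Int\rho$, invoking the equivariance axiom of the Lie algebra butterfly together with Lemma \ref{L:equivariant}; a further descent argument is then needed to show that $\Int\rho$ vanishes on $N$, so that the SE arm is defined on the quotient. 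Your phrase ``discrete central subgroup of $\tilde E$'' assumes what has to be proved.

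Second, your treatment of 2-morphisms --- ``an isomorphism of the middle Lie algebras integrates uniquely to an isomorphism of the middle Lie groups'' --- fails as stated: the middle group $E$ of a Lie butterfly $\bbH\to\bbG$ is an extension of $H_0$ by $G_1$, and since $\bbG$ is arbitrary, $E$ need be neither connected nor simply connected; hence a map $\Lie E\to\Lie E'$ does not directly integrate, nor is a homomorphism out of $E$ determined by its differential. The paper's parts (i) and (ii) supply exactly the missing arguments: faithfulness uses that $E$ is generated by $E^o$ and $G_1$ when $H_0$ is connected, and fullness integrates on the universal cover $\tilde E$ and shows the result descends to $E$ by identifying $\ker(\tilde E\to E)$ inside the universal cover of $G_1$, which uses $\pi_1H_0=\pi_2H_0=0$. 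So the connectedness hypotheses on $\bbH$ already enter nontrivially at the level of 2-morphisms, and your sketch needs these arguments to be complete.
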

 
This theorem is the 2-group version of the well-known fact in Lie theory that a Lie
homomorphism $f \: H \to G$ is uniquely given by its effect on Lie algebras
$\Lie f \: \Lie H \to \Lie G$, whenever $H$ is 1-connected.
It implies the following (see Corollary \ref{C:adjoint}).

 \begin{thm}{\label{T:main}}
      The bifunctor $\Lie \: \LieXM \to \LieAlgXM$ has a left adjoint
       $$\Int \: \LieAlgXM \to \LieXM.$$
 \end{thm}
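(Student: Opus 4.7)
The plan is to define $\Int$ explicitly on objects, extend it to $1$- and $2$-cells via Theorem \ref{T:integration}, and then read off the adjunction from the equivalence of hom-groupoids the theorem already supplies. On objects, given a Lie algebra crossed-module $\mathfrak{h}=[\mathfrak{h}_1\xrightarrow{\mu}\mathfrak{h}_0,\alpha]$, I set $\Int(\mathfrak{h})=[H_1\xrightarrow{\tilde\mu}H_0,\tilde\alpha]$, where $H_0$ and $H_1$ are the simply connected Lie groups integrating $\mathfrak{h}_0$ and $\mathfrak{h}_1$, $\tilde\mu$ is the unique integration of $\mu$, and $\tilde\alpha\colon H_0\to\Aut(H_1)$ is the unique integration of $\alpha\colon\mathfrak{h}_0\to\mathrm{Der}(\mathfrak{h}_1)$. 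Equivariance of $\tilde\mu$ and the Peiffer identity hold on Lie algebras, and in each case both sides are group homomorphisms out of the simply connected $H_1$, so by uniqueness of integration they hold on the nose at the group level. Thus $\Int(\mathfrak{h})\in\LieXM$; by construction it is $2$-connected in the sense of Definition \ref{D:connected} and satisfies $\Lie\Int(\mathfrak{h})=\mathfrak{h}$.

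For $1$- and $2$-cells I would invoke Theorem \ref{T:integration} directly, always with source $\bbH=\Int(\mathfrak{h})$. A morphism $F\colon\mathfrak{h}\to\mathfrak{g}$ in $\LieAlgXM$ reads, via $\Lie\Int=\id$, as a morphism $\Lie\Int(\mathfrak{h})\to\Lie\Int(\mathfrak{g})$, which the theorem lifts to a weak morphism $\Int(F)\colon\Int(\mathfrak{h})\to\Int(\mathfrak{g})$; the second half of the theorem handles $2$-cells in the same way. The bifunctor constraints---$\Int(G\circ F)\simeq\Int(G)\circ\Int(F)$, $\Int(\id)\simeq\id$, and the associativity/unit coherences---all fall out of the uniqueness clause of Theorem \ref{T:integration}, because in each such identity both sides are weak morphisms out of the $2$-connected object $\Int(\mathfrak{h})$ that differentiate to the same Lie $2$-algebra morphism, hence are canonically $2$-isomorphic.

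For the adjunction, fix $\bbG\in\LieXM$ and $\mathfrak{h}\in\LieAlgXM$. Applying Theorem \ref{T:integration} with $\bbH=\Int(\mathfrak{h})$ gives an equivalence of hom-groupoids
\[
 \mathrm{Hom}_{\LieXM}\!\bigl(\Int(\mathfrak{h}),\bbG\bigr)\;\simeq\;\mathrm{Hom}_{\LieAlgXM}\!\bigl(\Lie\Int(\mathfrak{h}),\Lie\bbG\bigr)\;=\;\mathrm{Hom}_{\LieAlgXM}\!\bigl(\mathfrak{h},\Lie\bbG\bigr).
\]
Naturality in $\bbG$ comes from functoriality of $\Lie$, and naturality in $\mathfrak{h}$ from bifunctoriality of $\Int$, both reducing to uniqueness in the theorem. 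The unit is the identity of $\mathfrak{h}$, and the counit $\Int\Lie\bbG\to\bbG$ is the weak morphism integrating $\id_{\Lie\bbG}$. The principal difficulty I foresee is not conceptual but organizational: assembling this data into a genuine bifunctor between bicategories and verifying the two triangle identities requires careful bookkeeping, though each step should reduce systematically to an application of the uniqueness clause of Theorem \ref{T:integration}.
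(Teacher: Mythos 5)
Your proposal is correct and follows essentially the same route as the paper: define $\Int$ on objects by integrating each term to its simply connected Lie group (so that $\Int(\mathfrak{h})$ is $2$-connected and $\Lie\Int(\mathfrak{h})=\mathfrak{h}$), and then read the adjunction off the hom-groupoid equivalence $\LieXM(\Int(\mathfrak{h}),\bbG)\simeq\LieAlgXM(\mathfrak{h},\Lie\bbG)$ supplied by the integration theorem. The paper's proof (Corollary \ref{C:adjoint}) is exactly this one-line deduction from Theorem \ref{T:bifunctor}; you merely spell out the bookkeeping it leaves implicit.
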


Here, $\LieXM$ is the bicategory of Lie crossed-modules and weak morphisms, and 
$\LieAlgXM$ is the bicategory of Lie algebra crossed-modules and 
weak morphisms.\footnote{Note that $\LieXM$ is naturally biequivalent to 
the 2-category of strict Lie 2-groups and
weak morphisms, and this is in turn biequivalent to the 2-category of strict 
Lie group stacks. The bicategory  $\LieAlgXM$ is naturally biequivalent to 
the full sub 2-category of the 2-category  $\TwoTerm$ of \lie2s consisting of
strict \lie2s, and this is in turn biequivalent to the 2-category of 2-term dglas.} 
The bifunctor $\Int$ takes a Lie crossed-module to the unique  2-connected (strict) Lie 2-group
that integrates it. When restricted to the full subcategory $\mathbf{Lie} \subset\LieAlgXM$ 
of Lie algebras, it coincides with
the standard integration functor which sends a Lie algebra $V$ to the simply-connected Lie
group $\Int V$ with Lie algebra $V$.

The problem of integrating $\Linf$-algebras has been studied in \cite{Getzler} and 
\cite{Henriques}, where they show how to integrate an $\Linf$-algebra (to a simplicial manifold).
The focus of these two papers, however, is different from ours in that we begin we {\em fixed}
Lie 2-groups $\bbH$ and $\bbG$ and study the problem of integrating a morphism of Lie
2-algebras $\Lie\bbH \to \Lie\bbG$.
The problem of integrating
morphisms of Lie 2-algebras in the case where the source is a Lie algebra
has been studied in  \cite{ZhuZambon} using the classical
approach via paths and solving PDEs. 
Our approach circumvents the necessity to use paths and solving
PDEs and is more formal, making it completely functorial (hence applicable in other
circumstances) and explicit.

\medskip\noindent{\em Problem (2).} For a Lie group $G$, its 0-th and 1-st connected covers
$G\ang{0}$ and $G\ang{1}$, which  are again Lie groups, play an important role in Lie
theory. We observe that for (strict) Lie 2-groups  one needs to go one  step further, i.e., 
one needs to consider
the 2-nd connected cover as well.  
We prove the following theorem.

 \begin{thm}{\label{T:covers}}
    For $n=0,1,2$, there are bifunctors $(-)\ang{n}\: \LieXM  \to \LieXM$  sending
    a Lie crossed-modules $\bbG$ to its $n$-th connected cover. These bifunctors
    come with natural transformations $q_n \: (-)\ang{n} \Rightarrow \id$  
    such that, for every $\bbG$, $q_n \: \bbG\ang{n} \to \bbG$  induces
    isomorphisms on $\pi_i$  for $i\geq n+1$.
    Furthermore, $(-)\ang{n}$ is right adjoint to the inclusion of the full sub bicategory 
    of $n$-connected Lie crossed-modules in $\LieXM$.
 \end{thm}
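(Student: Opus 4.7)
The plan is to construct each bifunctor $(-)\ang{n}$ by giving an explicit object-level recipe in standard Lie-theoretic terms, extending it to butterflies to obtain bifunctoriality, and then verifying the natural transformation $q_n$ and the adjunction by covering-space theory. For a Lie crossed-module $\bbG = [G_1 \to G_0]$ with boundary $\partial$, I would set: $\bbG\ang{0} = [G_1 \to G_0']$, where $G_0'\subset G_0$ is the preimage of the identity component of the Lie group $\pi_0(\bbG) = G_0/\partial(G_1)$ and $q_0$ is the inclusion; $\bbG\ang{1} = [\widetilde{G}_1 \to \widetilde{G}_0]$, obtained (after replacing $\bbG$ by $\bbG\ang{0}$ and handling connectedness) by taking the universal cover $\widetilde{G}_0$ of the identity component of $G_0$ and letting $\widetilde{G}_1$ be the appropriate piece of the pullback $G_1 \times_{G_0} \widetilde{G}_0$; and $\bbG\ang{2} = [\widetilde{\widetilde{G}}_1 \to \widetilde{G}_0]$, obtained after applying $\ang{1}$ (so that $\widetilde{G}_0$ is simply connected) by taking the universal cover $\widetilde{\widetilde{G}}_1$ of $\widetilde{G}_1$ and composing boundary maps. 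The maps $q_n$ are the evident ones: subgroup inclusions or covering projections.

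The $\pi_i$ claim is verified using the long exact sequence associated to the homotopy fiber sequence $G_1 \to G_0 \to |\bbG|$, with $|\bbG|$ the homotopy quotient $G_0/\!/G_1$. Since $\pi_2$ of any Lie group vanishes, this yields in particular $\pi_2(\bbG) = \Ker\!\bigl(\pi_1(G_1) \to \pi_1(G_0)\bigr)$, making it transparent that replacing $\widetilde{G}_1$ by its universal cover (once $\widetilde{G}_0$ is simply connected) exactly kills $\pi_2$. Similar bookkeeping shows that each $q_n$ is an isomorphism on $\pi_i$ for $i \geq n+1$ and kills the lower homotopy groups.

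To promote $(-)\ang{n}$ to a bifunctor on $\LieXM$, I would extend it to butterflies: given a butterfly from $\bbG$ to $\bbH$ with middle group $E$, applying the same Lie-theoretic operations to $E$ yields a butterfly from $\bbG\ang{n}$ to $\bbH\ang{n}$ compatibly with the $q_n$'s, functoriality and naturality being inherited from the universal properties of identity components and covering spaces. For the adjunction, I would show that when $\bbH$ is $n$-connected, composition with $q_n$ induces an equivalence $\Mor(\bbH, \bbG\ang{n}) \risom \Mor(\bbH, \bbG)$: given a butterfly from $\bbH$ to $\bbG$, the $n$-connectedness of $\bbH$ forces the middle group $E$ and its structure maps to land inside (respectively lift through) the cover used to build $\bbG\ang{n}$, uniquely by standard covering-space lifting, and this lift assembles into a butterfly into $\bbG\ang{n}$ that is essentially unique.

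The main obstacle will be the functoriality on butterflies: one must verify that the covering constructions applied to the middle group $E$ preserve all butterfly axioms — the normal-subgroup embeddings of $G_1$ and $H_1$, the equivariance conditions, and behavior under horizontal composition — in a manner natural with respect to $2$-morphisms of butterflies. Once this is set up, the long exact sequence computation and the covering-space lifts for the adjointness are routine applications of Lie-group covering theory.
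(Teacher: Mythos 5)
Your object-level definitions of $\bbG\ang{n}$ and the computation of $\pi_i(q_n)$ via the fibration sequence $G_1\to G_0\to [G_0/G_1]$ agree with the paper (up to the minor point that $G_0/\partial(G_1)$ need not be a Lie group when $\partial(G_1)$ is not closed, which is why the paper works with the identity component $G_0^o$ of $G_0$ rather than with the preimage of the identity component of $\coker\partial$). The genuine gap is in the step you yourself flag as the main obstacle: extending $(-)\ang{n}$ to butterflies. Applying ``the same Lie-theoretic operations'' to the middle group $E$ does not work. The NE--SW sequence of $B\ang{n}$ must exhibit the new middle group as an extension of $H_0\ang{n}$ by $L_G\ang{n-1}$, where $L_G=G_1\times_{G_0}G_0\ang{n}$, and neither $E^o$ nor $\widetilde{E^o}$ has the right kernel over $H_0\ang{n}$: already for $n=0$, $\iota(G_1)\cap E^o$ has no reason to coincide with $\partial^{-1}(G_0^o)$, and $\rho(E^o)$ need not even land in $G_0^o$ since $G_1$ need not be connected. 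The paper instead sets $F:=H_0\ang{n}\times_{H_0}E\times_{G_0}G_0\ang{n}$ --- pulling the extension back along $q_n$ on the $H_0$ side and base-changing along $q_n$ on the $G_0$ side --- and only then takes $F\ang{n-1}$; even identifying $\ker\big(F\ang{n-1}\to H_0\ang{n}\big)$ with $L_G\ang{n-1}$ requires the cartesian-square property ($\bigstar$2) of connected covers of groups. The same remark applies to your adjunction argument: one cannot lift $\rho\:E\to G_0$ through $G_0\ang{n}\to G_0$ by covering-space theory ($E$ is not simply connected); the paper's Proposition \ref{P:factor} again passes to $E\times_{G_0}G_0\ang{n}$ and its $(n-1)$-connected cover.

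Second, bifunctoriality requires compatibility with composition of butterflies, whose middle term is the quotient $E_C\oux{H_0}{H_1}E_B$. Showing that the connected-cover construction commutes with this quotient is the technical core of the paper (Lemma \ref{L:diamond} and Proposition \ref{P:compose}) and is not a formal consequence of universal properties of identity components and covers: the natural comparison map $F_C\oux{H_0\ang{n}}{L_H}F_B\to F_{B\circ C}$ is in general \emph{not} an isomorphism, only an isomorphism on identity components (Lemma \ref{L:elementary}), so a genuine argument is needed at exactly the point your proposal defers. Without these two ingredients the proposal does not yet prove the theorem.
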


The above theorem is essentially the content of Sections \ref{S:Connected}--\ref{S:Functoriality}.
We will be especially interested in 
the 2-connected cover $\bbG\ang{2}$ because, as suggested by Theorem \ref{T:main}, it 
seems to be the correct replacement for the universal cover of a Lie group in the
Lie theory of 2-groups.

\medskip
\noindent{\em Method.} To solve (1) and (2) we employ the
machinery of {\em butterflies}, which we believe
is of independent interest. Roughly speaking, a butterfly 
 (Definition \ref{D:butterfly}) between \lie2s  is a Lie algebra theoretic
version of a Morita morphism.
We use butterflies 
to give a new description of the  
2-category $\TwoTerm$ of \lie2s introduced in \cite{BaezCrans}. 
The advantage of using butterflies
is twofold.
On the one hand, butterflies do away with cumbersome cocycle formulas and are
much easier to manipulate. On the other hand, given the diagrammatic nature of butterflies,
they are better adapted to geometric situations; this is what allows us to prove Theorem
\ref{T:main}.

Butterflies for \lie2s parallel the corresponding theory for Lie 2-groups  developed in \cite{Maps}
(see $\S$9.6 therein) and \cite{ButterflyI}. In fact, taking Lie algebras converts a butterfly
in Lie groups to a butterfly in Lie algebras ($\S$\ref{S:Bifun}). 
This allows us to study weak morphisms of Lie 2-groups
using butterflies between \lie2s, thereby reducing the problem to one about extensions of 
Lie algebras. With Theorem \ref{T:main} at hand, we expect that this 
provide a convenient framework for studying weak morphisms of Lie 2-groups.

\medskip\noindent
{\em Organization of the paper.}
Sections \ref{S:lie2}--\ref{S:Bicat} are devoted to setting up the machinery of butterflies
and constructing the bicategory $\TwoTermB$ of \lie2s and butterflies.
We show that $\TwoTermB$ is biequivalent to the Baez-Crans 2-category $\TwoTerm$
of \lie2s.  In $\S$\ref{S:Cone}  we  discuss the {\em homotopy fiber} of a morphism of \lie2s. 
The homotopy fiber is the  Lie algebra counterpart of what we called the homotopy 
fiber of a weak morphism of Lie 2-groups in \cite{Maps}, $\S$9.4. 
The homotopy fiber of $f$ measures the deviation of $f$ from being an
equivalence and it sits in a natural exact triangle which gives rise to a 7-term long
exact sequence. The homotopy fiber  comes with a rich structure consisting of 
various brackets and Jacobiators (see $\S$\ref{SS:structure}).
We are not aware whether this structure has been previously studied. It is presumably 
some kind of a Lie algebra version of what is called a `crossed-module
in groupoids' in  \cite{Brown-Gilbert}.

In Section \ref{S:Weak} we review Lie 2-groups and weak morphisms (butterflies) of Lie 2-groups.
Sections \ref{S:Connected}--\ref{S:Functoriality} are devoted to the solution of Problem (2).
For a Lie crossed-modules $\bbG$ we define its $n$-th connected covers  
$\bbG\ang{n}$, for $n\leq 2$, and show that they are functorial and have the expected adjunction
property.

In Section \ref{S:Bifun} we solve Problem (1) by proving Theorems \ref{T:main}
and \ref{T:integration}. The proofs rely on the
solution of Problem (2) given in Sections \ref{S:Connected}--\ref{S:Functoriality} and the
theory of butterflies developed in Sections \ref{S:lie2}--\ref{S:Bicat}.

\medskip 

\noindent{\bf Acknowledgement.}
  I  thank Gustavo Granja,  Christoph Wockel and Chenchang Zhu for helpful conversations 
  on the subject of this paper.
  I also thank Fernando Muro, Jim Stasheff, Tim Porter, Dmitry
  Roytenberg and Marco Zambon for making useful  comments on an earlier 
  version of the paper.
  
\tableofcontents 

\section{\lie2s}{\label{S:lie2}}

In this section we review some basic facts about \lie2s. 
We follow the notations of  \cite{BaezCrans} (also see \cite{Roytenberg}).
All modules are over a fixed base commutative unital ring $K$.

 \begin{defn}{\label{D:lie2alg}}
  A {\bf \lie2} $\bbV$ consists of a 
  linear map $\partial \: V_1 \to V_0$ of  modules together with the 
  following data:
   \begin{itemize}
      \item three bilinear maps $[\cdot,\cdot] \: V_i\times V_j \to V_{i+j}$, $i+j=0,1$;
      \item  an antisymmetric trilinear map (the {\em Jacobiator}) 
         $\langle\cdot,\cdot,\cdot\rangle \:   V_0\times V_0\times V_0 \to V_1$.     
   \end{itemize}
  These maps satisfy the following axioms for all $w,x,y,z \in V_0$ and $h,k\in V_1$:
   \begin{itemize}
      \item $[x,y]=-[y,x]$;
      \item $[x,h]=-[h,x]$;
      \item $\partial([x,h])=[x,\partial h]$;
      \item $[\partial h, k]=[h,\partial k]$;
      \item $\partial \langle x,y,z\rangle = [x,[y,z]]+[y,[z,x]]+[z,[x,y]]$;
      \item  $\langle x,y,\partial h\rangle = [x,[y,h]]+[y,[h,x]]+[h,[x,y]]$;
      \item  $[\langle x,y,z\rangle,w]-[\langle w,x,y\rangle,z]+
      [\langle z,w,x\rangle,y]-[\langle y,z,w\rangle,x]=$ $$\langle [x,y],z,w\rangle + 
      \langle [z,w],x,y\rangle + \langle [x,z],w,y\rangle +
      \langle [w,y],x,z\rangle +\langle [x,w],y,z\rangle + \langle [y,z],x,w\rangle.$$
   \end{itemize}
  \end{defn}
 
We sometimes use the notation $[V_1 \to V_0]$ for a \lie2.

 \begin{defn}{\label{D:bracket}}
   The equality $[\partial h, k]=[h,\partial k]$ allows us to define  a bracket on
   $V_1$ by setting $[h,k]:=[\partial h, k]=[h,\partial k]$. 
 \end{defn}

 \begin{lem}{\label{L:failure}}
   For the bracket defined in Definition \ref{D:bracket}, the failure of 
   the Jacobi identity is measured by the equality
       $$\langle \partial h,\partial k,\partial h\rangle = [h,[k,l]]+[k,[l,h]]+[l,[h,k]].$$
 \end{lem}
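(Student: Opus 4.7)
The statement as written contains what appears to be a typo: the right-hand element inside the Jacobiator should almost certainly read $\partial l$ (not $\partial h$), since the right-hand side involves a third element $l$. I will therefore treat the claim as
\[
\langle \partial h,\partial k,\partial l\rangle = [h,[k,l]]+[k,[l,h]]+[l,[h,k]].
\]

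My plan is to obtain this as a direct corollary of the sixth axiom in Definition \ref{D:lie2alg}, namely
\[
\langle x,y,\partial m\rangle = [x,[y,m]]+[y,[m,x]]+[m,[x,y]],
\]
by specializing $x=\partial h$, $y=\partial k$, $m=l$. This substitution immediately yields
\[
\langle \partial h,\partial k,\partial l\rangle = [\partial h,[\partial k,l]]+[\partial k,[l,\partial h]]+[l,[\partial h,\partial k]].
\]

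The remaining step is simply to rewrite each term on the right using the definition of the bracket on $V_1$ from Definition \ref{D:bracket}. Concretely, $[\partial h,[\partial k,l]]=[\partial h,[k,l]]=[h,[k,l]]$ and $[\partial k,[l,\partial h]]=[\partial k,[l,h]]=[k,[l,h]]$. For the final term one uses the identity $\partial[h,k]=\partial[\partial h,k]=[\partial h,\partial k]$, obtained from the axiom $\partial[x,h]=[x,\partial h]$, so that $[l,[\partial h,\partial k]]=[l,\partial[h,k]]=[l,[h,k]]$. Assembling the three pieces gives the claim.

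The main (and only) subtlety is making sure that $[\partial h,\partial k]$ — which a priori lives in $V_0$ — really equals $\partial[h,k]$, so that the definition of the $V_1$-bracket applies to the third summand; this is handled by the small computation above. Beyond that, the proof is a one-line specialization of an existing axiom.
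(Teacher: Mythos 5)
Your proof is correct, including the observation that the statement contains a typo and should read $\langle \partial h,\partial k,\partial l\rangle$. The paper's own proof is simply the word ``Easy,'' and your argument --- specializing the axiom $\langle x,y,\partial h\rangle = [x,[y,h]]+[y,[h,x]]+[h,[x,y]]$ at $x=\partial h$, $y=\partial k$ and rewriting via Definition \ref{D:bracket}, with the one genuine check being $\partial[h,k]=[\partial h,\partial k]$ --- is evidently the intended one.
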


 \begin{proof}
  Easy.
 \end{proof}

A  crossed-module in Lie algebras is the same thing as a  strict
\lie2, i.e., one for which the Jacobiator $\langle\cdot,\cdot,\cdot\rangle$ is identically 
zero. More precisely, given a \lie2 $\bbV$  with zero Jacobiator we obtain, by Lemma
\ref{L:failure}, a 
Lie algebra structure on $V_1$, where the bracket is
 as in Definition \ref{D:bracket}. This makes $\partial$ a 
Lie algebra homomorphism. The action of $V_0$ on $V_1$ is the given bracket $[\cdot,\cdot]\:
V_0\times V_1 \to V_1$. Also, observe that a strict \lie2 is the same thing as a 2-term dgla.

 \begin{defn}{\label{D:H}}
   Let $\bbV=[\partial \: V_1 \to V_0]$ be a \lie2. We define 
    $$H_1(\bbV):=\ker\partial, \ \ H_0(\bbV):=\coker\partial.$$
 \end{defn}

Note that $H_0(\bbV)$ and $H_1(\bbV)$ both inherit natural Lie algebra structures, the latter 
being necessarily abelian. Furthermore, $H_1(\bbV)$ is naturally an $H_0(\bbV)$-module.

 \begin{defn}{\label{D:morphismlie2alg}}
   A {\bf morphism} $f \: \bbW \to \bbV$ of \lie2s consists of the following data:
    \begin{itemize}
       \item linear maps $f_i \: W_i \to V_i$, $i=0,1$, commuting with the differentials;
       \item an antisymmetric bilinear map $\varepsilon \: W_0\times W_0 \to V_1$.
    \end{itemize} 
   These maps satisfy the following axioms:
    \begin{itemize}
      \item for every $x,y \in W_0$, $[f_0(x),f_0(y)]-f_0[x,y]=\partial\varepsilon(x,y)$;
      \item for every $x \in W_0$ and 
       $h \in W_1$, $[f_0(x),f_1(k)]-f_1[x,k]=\varepsilon(x,\partial k)$;
      \item for every $x,y,z \in W_0$, 
         {\small 
            $$\langle f_0(x),f_0(y),f_0(z)\rangle-f_1(\langle x,y,z\rangle)=$$
            $$\varepsilon(x,[y,z])+\varepsilon(y,[z,x])+\varepsilon(z,[x,y])+
                    [f_0(x),\varepsilon(y,z)]+ [f_0(y),\varepsilon(z,x)]+ [f_0(z),\varepsilon(x,y)].$$
                    }
     \end{itemize}
 \end{defn}


A morphism $f \: \bbW \to \bbV$ of \lie2s induces a Lie algebra homomorphism
$H_0(f) \: H_0(\bbW) \to H_0(\bbV)$ and an $H_0(f)$-equivariant morphism of Lie algebra 
modules $H_1(f) \: H_1(\bbW) \to H_1(\bbV)$.

\begin{defn}
  A morphism $f \: \bbW \to \bbV$ of \lie2s  is called an {\bf equivalence} 
  (or a {\bf quasi-isomorphism}) if $H_0(f)$ and $H_1(f)$ are isomorphisms.
\end{defn}

 \begin{defn}
  A morphism of \lie2 is {\bf strict} if $\varepsilon$ is identically zero. 
  In the case where $\bbV$ and $\bbW$ are crossed-modules in Lie algebras, 
  this means that $f$ is a (strict) morphism of crossed-modules.
 \end{defn}

 \begin{defn}{\label{D:compositionmorph}}
    If $f=(f_0,f_1,\varepsilon)\: \bbW \to \bbV$ and $g=(g_0,g_1,\delta) \: \bbV \to \bbU$
    are morphisms of \lie2s, the composition $gf$ is defined to be the triple 
    $(g_0f_0,g_1f_1,\gamma)$, where
     $$\gamma(x,y):=g_1\varepsilon(x,y)+\delta(f_0(x),f_0(y)), \ \ x,y \in W_0.$$
 \end{defn}

Finally, we recall the definition of a transformation between morphisms of \lie2s. Up to a minor
difference in sign conventions, it is the same as \cite{Roytenberg}, Definition 2.20.
It is also equivalent to Definition
37 in the archive version [arXiv:math/0307263v5] of \cite{BaezCrans}.

 \begin{defn}{\label{D:transformation}}
   Given morphisms $f,g \: \bbW \to \bbV$ of \lie2s, a {\bf transformation} 
   (or  an {\bf  $L_{\infty}$-homotopy})
   from $g$ to $f$ is a linear map $\theta \: W_0 \to V_1$ such that
   \begin{itemize}
      \item for every $x \in W_0$, $f_0(x)-g_0(x)=\partial\theta(x)$;
      \item for every $h \in W_1$, $f_1(h)-g_1(h)=\theta(\partial h)$;
       \item for every $x,y \in W_0$, 
     {\small $$[\theta(x),\theta(y)]-\theta([x,y])=\varepsilon_f(x,y)-\varepsilon_g(x,y)+
       [g_0(y),\theta(x)]+[\theta(y),g_0(x)].$$}
   \end{itemize} 
 \end{defn}
 
 \begin{rem}
    It may perhaps look more natural to consider such a $\theta$ as a transformation from
    $f$ to $g$ and not from $g$ to $f$. (This is how it is in \cite{Roytenberg}, Definition 2.20.)
    We, however, choose the backward convention to be compatible with corresponding notion
    of transformation for butterflies (Definition \ref{D:butterflymorphism}).
 \end{rem}
 
 It is easy to see that if $f$ and $g$ are related by a transformation, 
 then $H_i(f)=H_i(g)$, $i=0,1$.
 
 \begin{defn}{\label{D:compositiontrans}}
   If $\theta$ is a transformation from $f$ to $g$ and $\sigma$ a transformation from $g$ to
   $h$, their composition is the transformation from $f$ to $h$ given by the linear map
    $\theta+\sigma$.
 \end{defn}
 
The following definition is the one in \cite{BaezCrans}, Proposition 4.3.8.
 
 \begin{defn}{\label{D:2term}}
  We define $\TwoTerm$ to be the 2-category in which the objects are \lie2s, the morphisms
  are as in Definition \ref{D:morphismlie2alg} and the 2-morphisms are as in 
  Definition \ref{D:transformation}.
 \end{defn}
\section{Butterflies between \lie2s}{\label{S:Butterfly}}

In this section we introduce the notion of a butterfly between  \lie2s and show that
butterflies encode morphisms of \lie2s (Propositions \ref{P:morphism}, \ref{P:transformation}).
A butterfly should be regraded as an analogue of a Morita morphism.

 \begin{defn}{\label{D:butterfly}}
  Let $\bbV$ and $\bbW$ be \lie2s. A {\bf butterfly} $B \: \bbW \to \bbV$ is a 
  commutative diagram
      $$\xymatrix@C=8pt@R=6pt@M=6pt{ W_1 \ar[rd]^{\kappa} \ar[dd]
                          & & V_1 \ar[ld]_{\iota} \ar[dd] \\
                            & E \ar[ld]^{\sigma} \ar[rd]_{\rho}  & \\
                                       W_0 & & V_0       }$$
  of modules in which $E$ is endowed with an antisymmetric bracket $[\cdot,\cdot] \:
  E\times E \to E$ satisfying the following axioms:
    \begin{itemize}
      \item both diagonal sequences are complexes and the NE-SW sequence
         $$0\to V_1 \llra{\iota} E \llra{\sigma} W_0 \to 0$$
       is short exact;
      \item  for every $a,b \in E$, 
         $$\rho[a,b]=[\rho(a),\rho(b)] \ \ \text{and}  \ \ \sigma[a,b]=[\sigma(a),\sigma(b)];$$   
      \item for every
        $a \in E$, $h \in V_1$, $l \in W_1$,
            $$[a,\iota(h)]=\iota[\rho(a),h] \ \ \text{and} \ \ [a,\kappa(l)]=\kappa[\sigma(a),l];$$ 
      \item  for every $a,b,c \in E$,  
        {\small $$\iota\langle\rho(a),\rho(b),\rho(c)\rangle+
           \kappa\langle\sigma(a),\sigma(b),\sigma(c)\rangle=[a,[b,c]]+[b,[c,a]]+[c,[a,b]].$$}
    \end{itemize}
 \end{defn}    

In the case where $\bbV$ and $\bbW$ are crossed-modules in Lie algebras (i.e., when the
 Jacobiators are identically zero), the bracket on $E$ makes it into a Lie algebra and all 
the maps in  the butterfly diagram become Lie algebra homomorphisms.

 \begin{rem}{\label{R:cone}}
   The map $\kappa+\iota \: W_1\oplus V_1 \to E$ has a natural \lie2 structure. Let us denote
   this \lie2 by $\bbE$. The two projections $\bbE \to \bbW$ and $\bbE \to \bbV$ are strict
   morphisms of \lie2s and the former is a quasi-isomorphism. Thus, we can think of
   the butterfly $B$ as a zig-zag of strict morphisms from $\bbW$ to $\bbV$. 
 \end{rem}

 \begin{defn}{\label{D:butterflymorphism}}
   Given two butterflies $B,B' \: \bbW \to \bbV$, a {\bf morphism}  of butterflies from $B$ to $B'$
   is a linear map $E \to E'$ commuting with the brackets and all four structure maps  
   of the butterfly. (Note that such a map $E \to E'$ is necessarily an isomorphism.)   
 \end{defn}

A butterfly $B \: \bbW \to \bbV$ induces a Lie algebra homomorphism
$H_0(B) \: H_0(\bbW) \to H_0(\bbV)$ and an $H_0(B)$-equivariant morphism 
$H_1(B) \: H_1(\bbW) \to H_1(\bbV)$. If $B$ and $B'$ are related by a morphism, 
then $H_i(B)=H_i(B')$,  $i=0,1$.

Let $f \: \bbW \to \bbV$ be a morphism of \lie2s as in Definition 
\ref{D:morphismlie2alg}. Define a bracket on $V_1\oplus W_0$ by the rule
  $$[(k,x),(l,y)]:=\big([k,l]+[f_0(x),l]+[k,f_0(y)]+\varepsilon(x,y),[x,y]\big).$$
Define the following four maps:
  \begin{itemize}
      \item $\kappa \: W_1 \to V_1\oplus W_0$, $\kappa(l)=(-f_1(l),\partial l)$,
      \item $\iota \: V_1 \to V_1\oplus W_0$, $\iota(k)=(k,0)$,
      \item $\sigma \: V_1\oplus W_0 \to W_0$, $\sigma(k,x)=x$,
      \item $\rho \: V_1\oplus W_0 \to V_0$, $\rho(k,x)=\partial k+f_0(x)$.
  \end{itemize}

 \begin{prop}{\label{P:morphism}}
    With the bracket on $V_1\oplus W_0$ and the maps 
    $\kappa$, $\iota$, $\rho$ and $\sigma$ defined as above,
    the diagram 
            $$\xymatrix@C=8pt@R=6pt@M=6pt{ W_1 \ar[rd]^{\kappa} \ar[dd]
                          & & V_1 \ar[ld]_{\iota} \ar[dd] \\
                            & V_1\oplus W_0 \ar[ld]^{\sigma} \ar[rd]_{\rho}  & \\
                                       W_0 & & V_0       }$$
     is a butterfly (Definition \ref{D:butterfly}). Conversely, given a butterfly as in  
     Definition \ref{D:butterfly} and a linear section $s \: W_0 \to E$ to $\sigma$,
     we obtain a morphism of \lie2s by setting 
     $$f_0:=\rho s, \ \ f_1:=s \partial-\kappa, \ \ \varepsilon:=[s(\cdot),s(\cdot)]-
        s[\cdot,\cdot].$$   
    (In the definition of the last two maps we are using the exactness of the 
    NE-SW sequence.)  Furthermore, these two constructions are inverse to each other.                         
 \end{prop}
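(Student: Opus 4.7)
The plan is to prove this by direct verification, treating the two directions as mirror images of one another. The key conceptual point that organizes the bookkeeping is that a butterfly equipped with a linear section $s\:W_0\to E$ of $\sigma$ is canonically split as a module, $E\cong V_1\oplus W_0$ via $(k,x)\mapsto\iota(k)+s(x)$; under this identification the structure maps $\kappa,\iota,\sigma,\rho$ and the bracket on $E$ transport to exactly the formulas stated in the proposition. So the forward direction is the special case of the reverse direction in which $E=V_1\oplus W_0$ is presented already in split form with section $s(x)=(0,x)$.

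For the forward direction I would first verify the diagram commutes and that the NE-SW sequence $0\to V_1\to V_1\oplus W_0\to W_0\to 0$ is short exact (trivial), and that the NW-SE sequence is a complex: $\rho\kappa(l)=-\partial f_1(l)+f_0(\partial l)=0$ since $f$ commutes with the differentials. Antisymmetry of $[\cdot,\cdot]$ on $V_1\oplus W_0$ is immediate from antisymmetry of the brackets on $V_0,V_1$ and of $\varepsilon$. The identity $\sigma[a,b]=[\sigma(a),\sigma(b)]$ is by inspection, while $\rho[a,b]=[\rho(a),\rho(b)]$ uses the first axiom of Definition \ref{D:morphismlie2alg} to convert $\partial\varepsilon(x,y)$ into $[f_0(x),f_0(y)]-f_0[x,y]$. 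The mixed axioms $[a,\iota(h)]=\iota[\rho(a),h]$ and $[a,\kappa(l)]=\kappa[\sigma(a),l]$ reduce to the second axiom of Definition \ref{D:morphismlie2alg} and the equivariance of $\partial$.

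For the reverse direction, given $B$ and $s$, one first checks well-definedness: $\sigma(s\partial(l)-\kappa(l))=\partial l-\partial l=0$ shows that $f_1$ lands in $\iota(V_1)\cong V_1$, and similarly $\sigma([s(x),s(y)]-s[x,y])=[x,y]-[x,y]=0$ shows $\varepsilon$ takes values in $V_1$. The chain map property $\partial f_1=f_0\partial$ follows by applying $\rho$. Each axiom of Definition \ref{D:morphismlie2alg} is then obtained by applying $\rho$ or $\iota^{-1}$ to the corresponding butterfly identity; for example $\partial\varepsilon(x,y)=\rho([s(x),s(y)]-s[x,y])=[f_0(x),f_0(y)]-f_0[x,y]$.

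The main obstacle in both directions is the Jacobiator axiom, which is the only identity where the three-fold bracket iteration on $E$ (or on $V_1\oplus W_0$) must be expanded and compared against the Jacobiators on $V_0$ and $W_0$ together with the six cross-terms involving $\varepsilon$. The plan here is to expand $[a,[b,c]]+[b,[c,a]]+[c,[a,b]]$ componentwise on $V_1\oplus W_0$ and match term-by-term with the third axiom of Definition \ref{D:morphismlie2alg}; the computation is lengthy but entirely mechanical. Finally, to see the two constructions are mutually inverse, one observes that starting from $f$ and then applying the reverse construction with the canonical section $s(x)=(0,x)$ returns $f$ on the nose, while starting from $(B,s)$ and then applying the forward construction yields the split butterfly on $V_1\oplus W_0$, which is isomorphic to $B$ via $(k,x)\mapsto\iota(k)+s(x)$; this is evidently a morphism of butterflies and hence (as noted in Definition \ref{D:butterflymorphism}) an isomorphism.
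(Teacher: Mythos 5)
Your proposal is correct, and it follows the route the paper intends: the paper states Proposition \ref{P:morphism} without proof, leaving exactly the direct term-by-term verification you outline (including the splitting $E\cong V_1\oplus W_0$ induced by $s$, the reduction of each butterfly axiom to the corresponding axiom of Definition \ref{D:morphismlie2alg}, and the observation that the two constructions are mutually inverse up to the canonical butterfly isomorphism $(k,x)\mapsto\iota(k)+s(x)$). All the individual steps you sketch check out, so nothing further is needed.
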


 \begin{prop}{\label{P:transformation}}
      Via the construction introduced in Proposition \ref{P:morphism}, transformations 
      between morphisms of \lie2s (Definition \ref{D:transformation})
      correspond to morphisms of butterflies (Definition \ref{D:butterflymorphism}). 
      In other words,  we have an equivalence of groupoids between the groupoid of 
      morphisms of \lie2s from $\bbW$ to $\bbV$ and the groupoid of butterflies 
      from $\bbW$ to $\bbV$.
 \end{prop}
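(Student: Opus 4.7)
The plan is to unwind what a morphism of butterflies means when the two butterflies come from the explicit construction of Proposition \ref{P:morphism}, and show the resulting data is exactly a transformation in the sense of Definition \ref{D:transformation}. First I would reduce to the case where both butterflies already have middle object $V_1\oplus W_0$: given arbitrary butterflies $B,B'\:\bbW\to\bbV$, pick linear sections $s,s'$ of $\sigma,\sigma'$, apply Proposition \ref{P:morphism} to obtain morphisms $f,g\:\bbW\to\bbV$, and form the canonical butterflies $B_f,B_g$ built on $V_1\oplus W_0$. The exact sequences in $B, B_f$ (and $B', B_g$) together with the sections yield canonical isomorphisms $B\cong B_f$ and $B'\cong B_g$, so morphisms $B\to B'$ correspond bijectively to morphisms $B_f\to B_g$ (I will actually take the direction $B_g\to B_f$ to match the backward convention flagged in the remark before Definition \ref{D:butterflymorphism}).

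Next I would analyze morphisms $\phi\:B_g\to B_f$. Since $\iota$ and $\sigma$ are the same in both butterflies, the commutation $\phi\iota=\iota$ and $\sigma\phi=\sigma$ force $\phi$ to have the form $\phi(k,x)=(k-\theta(x),x)$ for some linear $\theta\:W_0\to V_1$; note that $\phi$ is then automatically bijective. Compatibility with the NW and NE legs, namely $\phi\kappa_g=\kappa_f$ and $\rho_f\phi=\rho_g$, translates using the formulas $\kappa_g(l)=(-g_1(l),\partial l)$, $\rho_g(k,x)=\partial k+g_0(x)$ (and likewise for $f$) into the two identities $f_0(x)-g_0(x)=\partial\theta(x)$ and $f_1(h)-g_1(h)=\theta(\partial h)$, which are the first two clauses of Definition \ref{D:transformation}.

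The remaining content is bracket preservation. Expanding $\phi[(k,x),(l,y)]_g=[\phi(k,x),\phi(l,y)]_f$ using the bracket formula
\[
[(k,x),(l,y)]_{\bullet}=\bigl([k,l]+[\bullet_0(x),l]+[k,\bullet_0(y)]+\varepsilon_{\bullet}(x,y),\,[x,y]\bigr),
\]
the terms with a single $\theta$ collapse using the relation $f_0(x)=g_0(x)+\partial\theta(x)$ together with the identity $[\partial h,k]=[h,k]$ built into Definition \ref{D:bracket}. What remains, after cancelling $[k,l]$, is an equation among terms involving $\theta(x),\theta(y),f_0,g_0,\varepsilon_f,\varepsilon_g$, which, after one more substitution $f_0=g_0+\partial\theta$, becomes precisely the third axiom of Definition \ref{D:transformation}. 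This calculation is the most delicate step, though it is straightforward term-chasing; the main thing to watch is the sign of $\theta$ and the asymmetric form of the Jacobiator-like clause.

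Finally I would check that the assignment $\theta\mapsto\phi$ is inverse to the reading-off procedure just described, so it is a bijection of hom-sets, and that it respects composition of transformations (Definition \ref{D:compositiontrans}): the linear map $-\theta-\sigma$ corresponds to the composite of $\phi_\theta$ and $\phi_\sigma$, because their formulas compose additively in the second factor of $V_1\oplus W_0$. This upgrades the bijection into a functor, hence an equivalence of groupoids, and because the reduction in the first paragraph via sections is itself natural, the equivalence passes to arbitrary $B,B'$. The main obstacle is bookkeeping in the bracket identity; once that is in hand the rest of the proof is formal.
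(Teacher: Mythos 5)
Your proof is correct and is the direct verification the paper evidently intends (the paper states Proposition \ref{P:transformation} with no proof at all): reduce to the standard butterflies $B_f$, $B_g$ on $V_1\oplus W_0$ via Proposition \ref{P:morphism}, read off $\theta$ from compatibility with $\iota$ and $\sigma$, match compatibility with $\kappa$ and $\rho$ to the first two axioms of Definition \ref{D:transformation}, and check that bracket preservation unpacks, after substituting $f_0=g_0+\partial\theta$ and using $[\partial h,k]=[h,\partial k]$, to exactly the third axiom --- I verified this computation and it closes as you claim. The only slips are cosmetic: $\theta$ lives in the first ($V_1$) factor of $V_1\oplus W_0$, not the second, and the composite of $\phi_\theta$ and $\phi_\sigma$ is $\phi_{\theta+\sigma}$ in your parametrization, consistent with Definition \ref{D:compositiontrans}; neither affects the argument.
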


 \begin{ex} Let $V$ and $W$ be Lie algebras. Define $\mathbb{D}er(V)$ to 
  be the crossed-module in  Lie algebras $\partial \: V\to \operatorname{Der}(V)$, 
  where $\partial$ sends $v \in V$ to
  the derivation $[v,\cdot]$. Then, the equivalence classes of \lie2 morphisms
  $W \to \mathbb{D}er(V)$ are in bijection with isomorphism classes of extensions of 
  $W$ by $V$. Here, $W$ is regarded as the \lie2 $[0\to W]$.
 \end{ex}

\section{Homotopy fiber of a morphism of \lie2s}{\label{S:Cone}}

We introduce the homotopy fiber (or ``shifted mapping cone'') of a butterfly (and also of 
a morphismof \lie2s). The homologies of the homotopy fiber sit in a 7-term long 
exact sequence. We see in $\S$ \ref{SS:structure} that the homotopy fiber has a 
rich structure consisting of various brackets.

 \begin{defn}{\label{D:cone}}
  Let $B \: \bbW \to \bbV$, 
      $$\xymatrix@C=8pt@R=6pt@M=6pt{ W_1 \ar[rd]^{\kappa} \ar[dd]
                          & & V_1 \ar[ld]_{\iota} \ar[dd] \\
                            & E \ar[ld]^{\sigma} \ar[rd]_{\rho}  & \\
                                       W_0 & & V_0       }$$
  be a butterfly. We define the {\bf homotopy fiber} $\Hfib(B)$ of $B$ to be the NW-SE sequence
                 $$W_1 \llra{\kappa} E \llra{\rho} V_0.$$
  We will think of $W_1$, $E$ and $V_0$ as sitting in degrees $1$,$0$ and $-1$.  
 \end{defn}
 
The homotopy fiber measures the deviation of $B$ from being an equivalence
(see Remark \ref{R:mappingcone} below). 

 \begin{prop}{\label{P:long}} 
  More precisely,  we have a long exact sequence
   {\small $$\xymatrix@C=14pt@R=11pt@M=4pt{ 0 \ar[r] & H_1(\Hfib(B))\ar[r]
                 & H_1(\bbW) \ar[r]^{H_1(B)} & H_1(\bbV) \ar[r]
                 & H_0(\Hfib(B))  \ar `/7pt[d] `[l] `[dll] `[r] [dl]  &  & &\\
                 & & & H_0(\bbW)\ar[r]^{H_0(B)} 
                 & H_0(\bbV)\ar[r] & H_{-1}(\Hfib(B)) \ar[r] & 0. }$$}
 \end{prop}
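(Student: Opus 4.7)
The plan is a direct diagram chase at the level of underlying modules. First I identify the three homology groups of the three-term complex $\Hfib(B)=[W_1\xrightarrow{\kappa} E\xrightarrow{\rho} V_0]$ placed in degrees $1,0,-1$: they are $H_1(\Hfib(B))=\ker\kappa$, $H_0(\Hfib(B))=\ker\rho/\im\kappa$, and $H_{-1}(\Hfib(B))=\coker\rho$. The only structural facts needed are the short exactness of the NE-SW sequence $0 \to V_1 \xrightarrow{\iota} E \xrightarrow{\sigma} W_0 \to 0$, the complex condition $\rho\kappa=0$, and the commutativities $\rho\iota=\partial_V$, $\sigma\kappa=\partial_W$ (which also force $\sigma\iota=0$); no brackets or Jacobiators enter the argument.

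Next I construct the six arrows in the sequence. The leftmost map $H_1(\Hfib(B)) \hookrightarrow H_1(\bbW)$ is the inclusion $\ker\kappa \hookrightarrow \ker\partial_W$. The map $H_1(B)$ sends $l\in\ker\partial_W$ to the unique $k\in V_1$ with $\iota(k)=\kappa(l)$, which exists because $\kappa(l)\in\ker\sigma=\iota(V_1)$ and which satisfies $\partial_V k=\rho\iota(k)=\rho\kappa(l)=0$. The connecting map $H_1(\bbV) \to H_0(\Hfib(B))$ sends $k$ to $[\iota(k)]$, noting $\rho\iota(k)=\partial_V k=0$. The map $H_0(\Hfib(B)) \to H_0(\bbW)$ is $[a]\mapsto[\sigma(a)]$, well defined since $\sigma\kappa=\partial_W$. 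The map $H_0(B)$ sends $[x]$ to $[\rho(a)]$ for any $a\in E$ with $\sigma(a)=x$; this is well defined because two lifts differ by an element of $\iota(V_1)$ and $\rho\iota=\partial_V$. Finally, $H_0(\bbV) \to H_{-1}(\Hfib(B))$ is the quotient map $V_0/\im\partial_V \twoheadrightarrow V_0/\im\rho$, well defined since $\im\partial_V=\im(\rho\iota)\subset\im\rho$.

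Exactness at each of the five inner positions is then verified by a short element chase. As a representative sample, at $H_0(\bbW)$: if $[x]$ maps to zero in $H_0(\bbV)$, pick a $\sigma$-lift $a$ of $x$ and some $k\in V_1$ with $\rho(a)=\partial_V k=\rho\iota(k)$; then $a-\iota(k)\in\ker\rho$ still has $\sigma$-image $x$, producing a preimage in $H_0(\Hfib(B))$. The other four checks proceed by the same pattern: the kernel condition on the outgoing map produces an element in $\ker\sigma$, $\im\iota$, or $\ker\rho$, and the short exactness of the NE-SW diagonal immediately supplies the required preimage. Injectivity at the leftmost spot and surjectivity at the rightmost are immediate from the definitions.

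I do not expect any genuine obstacle. The argument is essentially the snake lemma applied to the implicit zig-zag $\bbW \xleftarrow{\sim} \bbE \to \bbV$ of Remark \ref{R:cone}, and one could alternatively deduce the statement by identifying $\Hfib(B)$ with the shifted mapping cone of the strict morphism $\bbE\to\bbV$ of two-term complexes and invoking the standard long exact sequence of cohomologies. The only mild care is to keep straight the identifications $\ker\sigma=\iota(V_1)$ and $\im\partial_V=\im(\rho\iota)$, which repeatedly mediate between the $V$-side and the $W$-side of the butterfly.
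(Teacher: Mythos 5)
Your proof is correct and is exactly the diagram chase that the paper leaves as an exercise: the identifications $H_1(\Hfib(B))=\ker\kappa$, $H_0(\Hfib(B))=\ker\rho/\im\kappa$, $H_{-1}(\Hfib(B))=\coker\rho$, your six maps, and the five exactness checks all go through using only the short exactness of the NE--SW sequence together with $\rho\kappa=0$, $\sigma\kappa=\partial_W$, $\rho\iota=\partial_V$ (the one nitpick being that $\sigma\iota=0$ comes from the NE--SW diagonal being a complex, not from the commutativities, but you assume that exactness anyway). Your closing observation that this is the long exact sequence of the shifted mapping cone of $\bbE\to\bbV$ along the quasi-isomorphism $\bbE\to\bbW$ of Remark \ref{R:cone} is also the conceptual reading the paper intends via Remark \ref{R:mappingcone}.
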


 \begin{proof}
  Exercise.
 \end{proof}                 
                 
Except for $H_{-1}(\Hfib(B))$, all the terms in the above sequence are Lie algebras
and all the maps are Lie algebra homomorphisms; see $\S$\ref{SS:structure}  below. 

 \begin{cor}{\label{C:flip}}
  A butterfly $B$ is an equivalence (i.e.,  induces isomorphisms on $H_0$ and 
  $H_1$) if and only if its NW-SE sequence is short exact. In this case, the inverse of
  $B$ is obtained by flipping it along the vertical axis.
 \end{cor}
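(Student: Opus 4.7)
The plan is to deduce both parts of the corollary directly from the seven-term long exact sequence of Proposition~\ref{P:long}. By definition, $B$ is an equivalence iff $H_0(B)$ and $H_1(B)$ are both isomorphisms. A standard diagram chase through the long exact sequence shows that this is in turn equivalent to the vanishing of all three homology groups of $\Hfib(B)$, namely $H_1(\Hfib(B))$, $H_0(\Hfib(B))$, and $H_{-1}(\Hfib(B))$. (This is simply the truncated version of the usual fact that a chain map is a quasi-isomorphism iff its mapping cone is acyclic; here injectivity of $H_1(B)$ gives the vanishing of $H_1$ of the fiber, surjectivity of $H_0(B)$ gives the vanishing of $H_{-1}$, and the remaining four-term middle piece forces $H_0(\Hfib(B))=0$.)

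Now I would simply unravel what acyclicity of the three-term complex $W_1\stackrel{\kappa}{\to}E\stackrel{\rho}{\to}V_0$ (placed in degrees $1,0,-1$) means: injectivity of $\kappa$, equality $\ker\rho=\im\kappa$, and surjectivity of $\rho$. This is exactly the assertion that the NW-SE sequence $0\to W_1\to E\to V_0\to 0$ is short exact. This proves the first part of the corollary.

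For the second part, assume the NW-SE sequence is short exact and reflect the butterfly diagram across its vertical axis, keeping $E$ together with its antisymmetric bracket fixed. In the reflected diagram, the maps $\iota,\rho$ and $\kappa,\sigma$ exchange their roles. All the axioms in Definition~\ref{D:butterfly} are manifestly symmetric under this swap (the two diagonal complex conditions, the two bracket-compatibility axioms involving $\rho/\sigma$, the two axioms involving $\iota/\kappa$, and the Jacobiator identity), and the newly-promoted NE-SW sequence is precisely the old NW-SE sequence, which is short exact by hypothesis. Hence the flipped diagram is a bona fide butterfly $\bbV\to\bbW$. By inspection on homology it induces the inverses of $H_0(B)$ and $H_1(B)$, so in particular it is itself an equivalence and serves as a weak inverse of $B$.

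The only substantive gap is that composition of butterflies and the identity butterfly have not been formalized in the text up to this point, so the word ``inverse'' must be interpreted as ``weak inverse in the bicategory $\TwoTermB$,'' which will only be constructed in Section~\ref{S:Bicat}. I expect this verification to be routine once composition is in place: the composite of a butterfly with its reflection can be exhibited, via Remark~\ref{R:cone}, as a zig-zag that collapses to the identity up to quasi-isomorphism. The main conceptual step is therefore the axiom-checking for the flipped diagram, and the main technical step is the diagram chase in Step~1; neither presents any real obstacle.
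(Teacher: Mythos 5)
Your argument is correct and is exactly the intended one: the paper derives this corollary directly from the seven-term exact sequence of Proposition~\ref{P:long} (acyclicity of $\Hfib(B)$ being short exactness of the NW--SE sequence), and the symmetry of the butterfly axioms under the flip gives the inverse. Your caveat about deferring the bicategorical verification of ``inverse'' to Section~\ref{S:Bicat} matches the paper, which likewise only makes the invertibility statement precise after Proposition~\ref{P:biequiv}.
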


Definition \ref{D:cone} leads to the following definition. 

 \begin{defn}
   For a  morphism $f=(f_0,f_1,\varepsilon)\: \bbW \to \bbV$ of \lie2s, 
   we define its homotopy fiber $\Hfib(f)$  to be the sequence
   $$W_1 \llra{(-f_1,\partial)} V_1\oplus W_0 \llra{\partial+f_0} V_0.$$
 \end{defn}
   
 \begin{rem}{\label{R:mappingcone}}
   If we forget all the brackets and index the terms of $\Hfib(f)$ by $2$,$1$,$0$, we see that
   $\Hfib(f)$ coincides with the cone of $f$ in the derived category of chain complexes. 
 \end{rem}
 
\subsection{Structure of the homotopy fiber}{\label{SS:structure} 

The $\Hfib(B)$ comes with some additional structure which we discuss below. 
First, let us rename the homotopy fiber in the following way
   $$C_1 \llra{\partial} C_0 \llra{\partial} C_{-1}.$$
We have the following data:
 \begin{itemize}
    \item antisymmetric bilinear  brackets $[\cdot,\cdot]_i \: C_i\times C_i \to C_i$, $i=1,0,-1$;
    \item antisymmetric bilinear brackets $[\cdot,\cdot]_{01} \: C_0\times C_1 \to C_1$, 
    $[\cdot,\cdot]_{10} \: C_1\times C_0 \to C_1$;
    \item antisymmetric trilinear Jacobiators $\langle\cdot,\cdot,\cdot\rangle_i \: 
    C_i \times C_i \times C_i \to C_{i+1}$, $i=-1,0$.
 \end{itemize}
We denote $[\cdot,\cdot]_{-1}$ by $[\cdot,\cdot]$. 
The following axioms are satisfied:
 \begin{itemize}
    \item $[\cdot,\cdot]_{01}=-[\cdot,\cdot]_{10}$;
    \item for every $a \in C_0$ and $h \in C_1$, $\partial([a,h]_{01})=[a,\partial h]_{0}$;
    \item for every $h,k \in C_1$, $[h,k]_{1}=[\partial h, k]_{01}=[h,\partial k]_{10}$;
    \item for every $a,b \in C_0$, $\partial[a,b]_0=[\partial a,\partial b]$;
    \item   for every $a,b,c \in C_0$,  
        {\small $$\langle\partial a,\partial b,\partial c\rangle_{-1}+
           \partial(\langle a,b,c\rangle_0)=
              [a,[b,c]_0]_0+[b,[c,a]_0]_0+[c,[a,b]_0]_0.$$}
    \item for every $a,b \in C_0$ and $h \in C_1$,  
       $$\langle a,b,\partial h\rangle_0=
              [a,[b, h]_{01}]_{01}+[b,[h,a]_{10}]_{01}+[h,[a,b]_0]_{10}.$$
    \item for every $a,b,c,d \in C_0$, 
    {\small $$[\langle a,b,c\rangle_0,d]_{10}-[\langle d,a,b\rangle_0,c]_{10}+
    [\langle c,d,a\rangle_0,b]_{10}-[\langle b,c,d\rangle_0,a]_{10}=$$ 
    $$\langle [a,b]_0,c,d\rangle_0 + \langle [c,d]_0,a,b\rangle_0 + \langle [a,c]_0,d,b\rangle_0 +
      \langle [d,b]_0,a,c\rangle_0 +\langle [a,d]_0,b,c\rangle_0 + \langle [b,c]_0,a,d\rangle_0.$$}
 \end{itemize}

There are natural chain maps $\bbV \to \Hfib(B)[-1]$ and $\Hfib(B) \to \bbW$ which respect all
the brackets on the nose. In fact, $\Hfib(B) \to \bbW \to \bbV$ is an exact triangle in the derived
category of chain complexes (note the reverse shift due to homological indexing).
 
\section{The bicategory of Lie 2-algebras and butterflies}{\label{S:Bicat}}
 
Given butterflies
  $$\xymatrix@C=8pt@R=6pt@M=6pt{ W_1 \ar[rd]^{\kappa}  \ar[dd] 
                            & & V_1 \ar[ld]_{\iota}  \ar[dd]  \\
                  & E \ar[ld]^{\sigma}   \ar[rd]_{\rho}   & \\
                  W_0 & & V_0       } \ \ \ \ \
 \xymatrix@C=8pt@R=6pt@M=6pt{ V_1 \ar[rd]^{\kappa'}  \ar[dd] 
                            & & U_1 \ar[ld]_{\iota'}  \ar[dd]  \\
                  & F \ar[ld]^{\sigma'}  \ar[rd]_{\rho'}  & \\
                  V_0 & & U_0       }$$
we define their composition to be the butterfly
    $$\xymatrix@C=20pt@R=10pt@M=4pt{ W_1 \ar[rd]^{(\kappa,0)}  \ar[dd] 
                            & & U_1 \ar[ld]_{(0,\iota')}   \ar[dd]  \\
                  & E\ouplus{V_0}{V_1}F \ar[ld]^{\sigma\circ\pr}   \ar[rd]_{\rho'\circ\pr}   & \\
                  W_0 & & U_0       }$$
Here $E\ouplus{V_0}{V_1}F$ is, by definition, the fiber product of
$E$ and $F$ over $V_0$ modulo the diagonal image of $V_1$ via $(\iota,\kappa')$.
The bracket on it is defined component-wise.

 \begin{prop}
    With butterflies as morphisms, morphisms of butterflies as 
    $2$-morphisms, and composition defined as above, \lie2s form a bicategory $\TwoTermB$.
 \end{prop}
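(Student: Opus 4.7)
The plan is to verify the bicategory axioms for $\TwoTermB$ in four stages: well-definedness of composition, construction of identity butterflies, construction of associators and unitors, and verification of pentagon and triangle coherence. Throughout, the componentwise nature of the bracket on the composed middle object and the explicit presentation of elements as equivalence classes in the quotient fiber product $E \ouplus{V_0}{V_1} F$ are what make the verifications tractable.

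First I would check that the composition $E \ouplus{V_0}{V_1} F$ is itself a butterfly. The diagonal map $V_1 \to E \times_{V_0} F$, $h \mapsto (\iota(h), \kappa'(h))$, genuinely lands in the fiber product because $\rho\iota = \partial_\bbV = \sigma'\kappa'$ by the commutativity of the two butterflies. Its image is an ideal for the componentwise bracket, by virtue of $[a,\iota(h)] = \iota[\rho(a),h]$ and $[b,\kappa'(h)] = \kappa'[\sigma'(b),h]$ combined with the relation $\rho(a) = \sigma'(b)$ holding on the fiber product. Hence the quotient bracket is well-defined and antisymmetric. Short exactness of the NE-SW sequence $0 \to U_1 \to E \ouplus{V_0}{V_1} F \to W_0 \to 0$ is a routine diagram chase from the NE-SW exactness for $E$ and $F$, and the four bracket-compatibility axioms transfer componentwise. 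The genuine computation is the Jacobiator identity: applying the Jacobiators for $E$ and $F$ to representatives and using $\rho(a) = \sigma'(b)$, the difference between the two sides of the Jacobiator axiom for the composition turns out to be precisely $(\iota(\xi), \kappa'(\xi))$ for $\xi = \langle\rho(a),\rho(a'),\rho(a'')\rangle \in V_1$, which lies in the diagonal image and hence vanishes in the quotient.

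Next, the identity on $\bbV$ is the butterfly corresponding to $\id_\bbV$ under Proposition \ref{P:morphism}, with middle module $V_1 \oplus V_0$ and the structure maps and bracket recorded there. The right unitor on a butterfly $B\colon \bbW \to \bbV$ is the map $E \ouplus{V_0}{V_1} (V_1 \oplus V_0) \to E$, $[(a,(k,x))] \mapsto a + \iota(k)$; this is well-defined modulo the diagonal, bijective, and compatible with all four structure maps and the bracket by a direct check (the crucial point being that $\rho(a + \iota(k)) = \rho(a) + \partial k = x + \partial k = \rho_{\id}(k,x)$). The left unitor is analogous. The associator is the canonical isomorphism between the two iterated composites, coming from the associativity of the fiber product over a fixed base combined with the commutativity of the two quotients by the diagonal images of $V_1$ and $V'_1$; it manifestly intertwines the componentwise brackets and structure maps.

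For pentagon and triangle coherence, the most economical route is to transport the coherence from the strict 2-category $\TwoTerm$ via Propositions \ref{P:morphism} and \ref{P:transformation}: choosing linear sections for each butterfly in a pasting diagram, the resulting morphisms of \lie2s compose strictly, while the section-dependence of the correspondence is absorbed into morphisms of butterflies that coincide with the associators and unitors just defined, forcing the pentagon and triangle identities. Alternatively one verifies pentagon and triangle directly by manipulating the iterated quotient fiber products. The main obstacle in this proof is the Jacobiator verification in the composition step, where the interaction between the Jacobiators of $E$ and $F$, reconciled via the compatibility $\rho(a) = \sigma'(b)$ and the butterfly identities $\rho\iota = \partial_\bbV = \sigma'\kappa'$, must land precisely in the diagonal image of $V_1$; every other step reduces to standard diagrammatic bookkeeping.
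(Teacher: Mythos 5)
The paper states this proposition without proof, so there is no argument of the author's to compare against; judged on its own, your verification is correct and is exactly the check the paper leaves to the reader. In particular you isolate the one non-formal point correctly: the componentwise Jacobi sum for $E\ouplus{V_0}{V_1}F$ differs from $\kappa\langle\sigma(\cdot),\cdot,\cdot\rangle+\iota'\langle\rho'(\cdot),\cdot,\cdot\rangle$ precisely by $(\iota(\xi),\kappa'(\xi))$ with $\xi=\langle\rho(a),\rho(a'),\rho(a'')\rangle\in V_1$, which vanishes in the quotient by the diagonal. The only caveat is your transport of the pentagon and triangle identities from $\TwoTerm$: to avoid circularity with Proposition \ref{P:biequiv} you must first verify that, under the section dictionary of Propositions \ref{P:morphism} and \ref{P:transformation}, composition of butterflies corresponds to composition of morphisms up to a specified transformation -- but you supply explicit associators and unitors from the iterated quotient fiber products, so the direct verification you mention as an alternative closes this gap.
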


For a \lie2 $\bbV$, the identity butterfly from $\bbV$ to itself is defined to be
              $$\xymatrix@C=8pt@R=6pt@M=6pt{ V_1 \ar[rd]^{\kappa} \ar[dd]
                          & & V_1 \ar[ld]_{\iota} \ar[dd] \\
                            & V_1\oplus V_0 \ar[ld]^{\sigma} \ar[rd]_{\rho}  & \\
                                       V_0 & & V_0       }$$
Here, the bracket on $V_1\oplus V_0$ is defined by
                        $$[(k,x),(l,y)]:=\big([k,l]+[x,l]+[k,y],[x,y]\big).$$
The four structure maps of the butterfly are:
  \begin{itemize}
      \item $\kappa \: V_1 \to V_1\oplus V_0$, $\kappa(l)=(-l,\partial l)$
      \item $\iota \: V_1 \to V_1\oplus V_0$, $\iota(k)=(k,0)$
      \item $\sigma \: V_1\oplus V_0 \to V_0$, $\sigma(k,x)=x$
      \item $\rho \: V_1\oplus V_0 \to V_0$, $\rho(k,x)=\partial k+x$
  \end{itemize}

 \begin{prop}{\label{P:biequiv}}
    The construction of Proposition \ref{P:morphism} induces a biequivalence 
    $\TwoTerm\cong\TwoTermB$.
 \end{prop}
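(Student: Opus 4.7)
The plan is to construct a pseudofunctor $F\:\TwoTerm\to\TwoTermB$ which is the identity on objects, sends a morphism $(f_0,f_1,\varepsilon)$ of \lie2s to its associated butterfly via Proposition \ref{P:morphism}, and sends a transformation to the induced morphism of butterflies via Proposition \ref{P:transformation}. By the standard biequivalence criterion, it suffices to check (i) essential surjectivity on objects and (ii) local equivalence on hom-categories: (i) holds trivially since $F$ is the identity on objects, and (ii) is precisely the content of Proposition \ref{P:transformation}. The real work is then to verify that $F$ is a well-defined pseudofunctor.

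To build the unit constraint, I would unwind Proposition \ref{P:morphism} applied to the identity morphism $(\id_{V_0},\id_{V_1},0)$: this reproduces on the nose the identity butterfly displayed just before Proposition \ref{P:biequiv}, so $F(\id_\bbV)=\id_\bbV$ strictly. For the compositor, given $f\:\bbW\to\bbV$ and $g\:\bbV\to\bbU$, the composite $F(g)\circ F(f)$ has middle object $(V_1\oplus W_0)\oux{V_0}{V_1}(U_1\oplus V_0)$ while $F(g\circ f)$ has middle object $U_1\oplus W_0$. I would define
\[
   \phi_{g,f}\:\bigl((k,x),(l,y)\bigr)\longmapsto \bigl(l+g_1(k),\,x\bigr),
\]
check it descends to the quotient by the diagonal image of $V_1$ (immediate from $g_1$-linearity), and verify it intertwines the four structure maps of the butterfly (a direct check using the fiber-product constraint $y=\partial k+f_0(x)$ together with $\partial g_1=g_0\partial$).

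Associativity of the compositor reduces to comparing two iterated fiber quotients for a triple composable string of morphisms, both of which are canonically identified with a single module of the same form as in Proposition \ref{P:morphism} via iterated application of $\phi_{g,f}$; the unit triangles follow at once from strict preservation of identities; naturality in $2$-morphisms is a direct unravelling of Definition \ref{D:butterflymorphism} and Definition \ref{D:transformation} along the recipe of Proposition \ref{P:transformation}. The main obstacle I anticipate is the bracket-preservation check for $\phi_{g,f}$: expanding both sides of the equation $\phi_{g,f}[-,-]=[\phi_{g,f}(-),\phi_{g,f}(-)]$ produces a long string of terms in $U_1$, and reconciling them requires the composition formula $\gamma(x,y)=g_1\varepsilon(x,y)+\delta(f_0(x),f_0(y))$ of Definition \ref{D:compositionmorph} together with both failure-of-morphism axioms of Definition \ref{D:morphismlie2alg} for $f$ and $g$, used to rewrite terms such as $g_1[f_0(x),k]$ and $[g_0f_0(x),l]$ in a common form on the constraint hypersurface. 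Once this bookkeeping is complete, the remaining coherence axioms follow by direct computation on representatives.
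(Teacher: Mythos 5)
Your proposal is correct and is precisely the direct verification that the paper compresses into the words ``straightforward verification'': identity on objects, Proposition \ref{P:morphism} on morphisms, Proposition \ref{P:transformation} giving the local equivalence, with the only real content being the compositor. Your formula $\phi_{g,f}\bigl((k,x),(l,y)\bigr)=(l+g_1(k),x)$ does check out (it kills the diagonal $V_1$, intertwines all four structure maps using the constraint $y=\partial k+f_0(x)$, and preserves brackets via the axiom $\delta(v,\partial h)=[g_0(v),g_1(h)]-g_1[v,h]$ together with Definition \ref{D:bracket}), so the plan goes through as stated.
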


 \begin{proof}
  Straightforward verification.
 \end{proof}

  By Lemma \ref{C:flip}, a butterfly $B \: \bbW \to \bbV$ is invertible (in the bicategorical sense) 
  if and only if its NW-SE sequence is also short exact. In this case, the inverse of 
  $B$ is obtained by flipping $B$ along the vertical axis.

\subsection{Composition of a  butterfly with a strict morphism}{\label{SS:strictcompose}}

Composition of butterflies takes a simpler from when one of the butterflies comes
from a strict morphism. When the first morphisms is strict, say
  $$\xymatrix@C=8pt@R=6pt@M=6pt{ W_{1}  \ar[rr]^{f_{1}} \ar[dd]
                              & & V_{1}   \ar[dd]  \\
                                           &     & \\
                        W_0 \ar[rr]_{f_0} & & V_0       }$$
then the composition is
      $$   \xymatrix@C=8pt@R=6pt@M=6pt{ W_{1} \ar[rd]   \ar[dd]
                            & & U_{1} \ar[ld]  \ar[dd]  \\
      & f^{*}_0(F) \ar[ld]^{f^{*}_1(\sigma')} \ar[rd] & \\
                         W_0 & & U_0       }$$
Here, $f^{*}_0(F)$ stands for the pullback of the extension $F$
along $f_0 \: W_0 \to V_0$. More precisely,
$f^{*}_0(F)=W_0\oplus_{V_0}F$ is the fiber product.

When the second morphisms is strict, say
          $$\xymatrix@C=8pt@R=6pt@M=6pt{ V_{1}  \ar[rr]^{g_{1}}
          \ar[dd]
                            & & U_{1}   \ar[dd]  \\
                  &          & \\
                  V_0 \ar[rr]_{g_0} & & U_0       }$$
 then the composition is
         $$   \xymatrix@C=8pt@R=6pt@M=6pt{ W_{1} \ar[rd]   \ar[dd]
                    & & U_{1} \ar[ld]_{g_{1,*}(\iota)}  \ar[dd]  \\
                                & g_{1,*}(E) \ar[ld]  \ar[rd]   & \\
                                              W_0 & & U_0      }$$
Here, $g_{1,*}(E)$ stands for the push forward of the extension $E$
along $g_{1} \: V_{1} \to U_{1}$. More precisely,
$g_{1,*}(E)=E\oplus_{V_{1}} U_{1}$ is the pushout.

\section{Weak morphisms of Lie 2-groups and butterflies}{\label{S:Weak}}

There are at least three equivalent ways to define weak
morphisms of Lie 2-groups. One way is to localize the 2-category of Lie 2-groups and
strict morphisms with respect to equivalences, and define weak morphisms to be 
morphisms in this localized category
(by definition, an equivalence between Lie 2-groups is a 
morphism  which induces isomorphisms on $\pi_0$ and $\pi_1$).

The second definition is that a weak morphism of Lie 2-groups is 
a weak morphism (i.e., a monoidal functor) between the associated Lie group stacks. 
 
The third definition, which is shown
in \cite{ButterflyI} to be equivalent to the stack definition, makes use of butterflies and is the
subject of this section. 
It is the butterfly definition that proves to 
be most suitable for the study of connected covers
of Lie 2-groups and also for proving our integration result (Theorem \ref{T:main}).

\subsection{A note on terminology}{\label{SS:terminology}
Lie 2-group could mean  different things to different people, so  some clarification
in terminology is in order before we move on. One definition is that a Lie 2-group is
a crossed-module $\bbG:=[\partial \: G_1 \to G_0]$ in the category of Lie groups. Although
most known examples of Lie 2-groups are of this  form, this is not the most general definition,
as it is too {\em strict}.

Arguably, the correct definition is that  a Lie 2-group is a 
differentiable group stacks, that is, a (weak)
group object  $\mathcal{G}$ in the category of differentiable stacks. Every Lie crossed-module
$[G_1 \to G_0]$ gives rise to a group stack $[G_0/G_1]$, but not every differentiable group stack
is of this form. A  differentiable group stack $\mathcal{G}$ comes from a Lie crossed-module
if and only if it admits an atlas $\varphi \: G_0 \to \mathcal{G}$ such that $G_0$ is a Lie group
and $\varphi$ is a differentiable  (weak) homomorphism.

Another definition of a Lie 2-group (which is presumably equivalent to the stack definition) is
discussed in the Appendix of \cite{Henriques}. This definition is motivated by the fact that a Lie
2-group gives rise to a simplicial manifold and, conversely, a simplicial manifold with
certain fibrancy properties and some conditions on its homotopy
groups should come from a Lie 2-group.

In this paper we only deal with strict Lie 2-groups, i.e., 
Lie group stacks coming from  Lie crossed-modules.
Presumably our theory can be extended to arbitrary Lie 2-groups.

Throughout the text, all Lie groups are assumed to be finite dimensional unless otherwise stated.

\subsection{Quick review of Lie butterflies}{\label{SS:Liebutterfly}}

For more details on butterflies see \cite{Maps}, especially $\S$9.6, $\S$10.1, and \cite{ButterflyI}. 
In what follows, by a homomorphism of Lie groups we mean a differentiable homomorphism.

 \begin{rem}
  In \cite{Maps} and \cite{ButterflyI} we use the right-action convention for crossed-modules, while
  in these notes, in order to be compatible with the existing literature on $L_{\infty}$-algebras, 
  we have used the  left-action convention for Lie algebra crossed-modules. Therefore, 
  for the sake of consistency,   we will adopt the   left-action convention for Lie 
  crossed-modules as well.
 \end{rem}

Let $\bbG$ and $\bbH$ be a Lie crossed-modules (i.e., a crossed-modules 
in the category of Lie groups). A {\bf butterfly} $B \: \bbH \to \bbG$ is a commutative diagram
      $$\xymatrix@C=8pt@R=6pt@M=6pt{ H_1 \ar[rd]^{\kappa} \ar[dd]
                          & & G_1 \ar[ld]_{\iota} \ar[dd] \\
                            & E \ar[ld]^{\sigma} \ar[rd]_{\rho}  & \\
                                       H_0 & & G_0       }$$
in which both diagonal sequences are complexes of Lie groups, and the NE-SW
sequence is short exact. We also require that
for every $x \in E$, $\alpha \in G_2$ and $\beta \in H_2$ the following equalities hold:
    $$\iota(\rho(x)\cdot\alpha)=x\iota(\alpha) x^{-1}, \ \
      \kappa(\sigma(x)\cdot\beta)=x\kappa(\beta) x^{-1}.$$

A butterfly between Lie crossed-modules can be regarded as a Morita morphism which respects
the group structures. A morphism $B \to B'$ of butterflies is, by definition,  
a homomorphism $E\to E'$ of 
Lie groups which commutes with all four structure maps of the butterflies. Note that 
such a morphism is necessarily an isomorphism. 

 \begin{rem}
  For the reader interested in the  topological version of the story, we remark that in the
  definition of a topological butterfly one needs to assume that the map $\sigma \:
  E \to H_0$, viewed as a continuous map of topological spaces, admits local sections.
  This is automatic in the Lie case because $\sigma$ is a submersion.
 \end{rem}

Thus, with butterflies as morphisms, Lie crossed-modules form a bicategory in 
which every $2$-morphism is an isomorphism. We denote this bicategory by $\LieXM$. 
The following theorem justifies why butterflies provide the right notion of morphism.

 \begin{thm}[\oldcite{ButterflyI}]
  The 2-category of (strict) Lie 2-groups  and weak morphisms is biequivalent
  to the bicategory  $\LieXM$ of Lie crossed-modules and butterflies.
 \end{thm}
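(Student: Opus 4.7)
The plan is to follow the same strategy used to prove Proposition \ref{P:biequiv} for \lie2s, transferring each step to the Lie group setting. At the level of objects there is nothing to do: the classical equivalence between crossed-modules in Lie groups and strict Lie 2-groups (the latter viewed as internal groupoids in the category of Lie groups) identifies the objects on the two sides. So the real content is about 1-morphisms and 2-morphisms.

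For 1-morphisms I would build maps of hom-groupoids in both directions. Given a butterfly $B \: \bbH \to \bbG$ with middle Lie group $E$, form a Lie crossed-module $\bbE$ supported on $E$ in degree zero and fitting into two strict projections $\bbH \leftarrow \bbE \to \bbG$, in direct analogy with Remark \ref{R:cone}. The short exactness of the NE-SW sequence forces the left projection $\bbE \to \bbH$ to be a strict equivalence (isomorphism on $\pi_0$ and $\pi_1$). Localizing strict morphisms at equivalences, this zig-zag produces a weak morphism of Lie 2-groups. Conversely, given a weak morphism of the associated Lie group stacks, pull back the atlas $G_0 \to [G_0/G_1]$ along the composite $H_0 \to \mathcal{H} \to \mathcal{G}$ to obtain a $G_1$-torsor $E \to H_0$; the monoidal structure on the weak morphism endows $E$ with a Lie group structure fitting into a short exact sequence $1 \to G_1 \to E \to H_0 \to 1$, and the compatibility with $H_1$ gives the map $\kappa$. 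This reconstructs the butterfly and matches the construction in \cite{Maps}, $\S$9.6.

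For 2-morphisms, one verifies that a morphism of butterflies $E \to E'$ (an isomorphism compatible with all four legs) corresponds bijectively to a monoidal natural isomorphism between the associated weak morphisms, mirroring Proposition \ref{P:transformation}. Finally one checks that composition of butterflies (via fiber-product-modulo-diagonal) agrees with the composition of weak morphisms up to a coherent isomorphism, and that the identity butterfly functions as a unit; these verifications are diagram-chases of the same flavor as in the \lie2 case. The main obstacle is differential-geometric rather than algebraic: one must know that the group $E$ reconstructed from the stack side is a finite-dimensional Lie group and that $\sigma \: E \to H_0$ is a submersion. In the finite-dimensional smooth setting this is automatic because surjective Lie group homomorphisms are submersions and thus admit smooth local sections, so the purely algebraic bookkeeping is essentially identical to Proposition \ref{P:biequiv}; the full technical verification is carried out in \cite{ButterflyI}.
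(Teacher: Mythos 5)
The paper does not actually prove this statement: it is imported verbatim from \cite{ButterflyI}, and no argument is given in the present text. So there is no internal proof to compare against; what I can assess is whether your sketch is a viable outline of the argument in the cited reference, and it essentially is. The identity-on-objects observation, the passage from a butterfly to the zig-zag $\bbH \leftarrow \bbE \to \bbG$ with the left leg an equivalence (the Lie-group analogue of Remark \ref{R:cone}), the reconstruction of $E$ as the pullback of the atlas $G_0 \to [G_0/G_1]$ along $H_0 \to \mathcal{H} \to \mathcal{G}$ with the group law coming from the monoidal structure, and the matching of butterfly morphisms with monoidal natural isomorphisms are all the correct ingredients, and the submersion remark disposes of the only genuinely differential-geometric point in the finite-dimensional case.

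Two cautions. First, you silently switch between two models of ``weak morphism'' --- the localization of strict morphisms at equivalences (used in your butterfly-to-morphism direction) and monoidal functors of group stacks (used in the converse direction). The paper asserts these models agree, but a self-contained proof must either fix one model throughout or include the comparison; as written, your two constructions do not obviously land in mutually inverse positions. Second, the step ``the monoidal structure on the weak morphism endows $E$ with a Lie group structure fitting into a short exact sequence'' is where the real work of \cite{ButterflyI} lives: one must check that the torsor $E$ is representable by a finite-dimensional Lie group, that the multiplication induced by the monoidal data is smooth and associative (this uses the coherence/associativity constraint of the monoidal functor, not just its existence), and that the resulting $\kappa$ satisfies the two equivariance identities in the definition of a butterfly. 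Calling this ``purely algebraic bookkeeping'' understates it; it is the heart of the theorem, and a complete write-up would have to spell it out rather than defer entirely to the reference.
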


We recall (\cite{Maps}, $\S$10.1) how composition of two butterflies  $C \: \bbK \to \bbH$ and 
$B \: \bbH \to \bbG$ is defined. Let $F$ and $E$ be the Lie groups appearing in the center 
of these butterflies, respectively. Then, the composition $B\circ C$ is  the butterfly
    $$\xymatrix@C=10pt@R=4pt@M=4pt{ K_1 \ar[rd]  \ar[dd]
                            & & G_1 \ar[ld]  \ar[dd] \\
                  & F\oux{H_0}{H_1}E \ar[ld]  \ar[rd]   & \\
                  K_0 & & G_0       }$$
where $F\oux{H_0}{H_1}E$ is the fiber product $F\oux{H_0}{}E$ modulo the diagonal image 
of $H_1$.

In the case where one of the butterflies is strict, the composition takes a simpler form similar to 
the discussion of $\S$\ref{SS:strictcompose}. See (\cite{Maps}, $\S$10.2) for more details.

\section{Connected covers of a Lie 2-group}{\label{S:Connected}}

In this section  we construct  $n$-th connected covers $\bbG\ang{n}$ of a Lie crossed-module
$\bbG=[G_1\to G_0]$
for $n=0,1,2$. In $\S$\ref{S:Functoriality} we prove that
these are functorial with respect to butterflies. Hence,  in particular, they are
invariant under equivalence of Lie crossed-modules (Corollary \ref{C:cover}). All Lie groups
are assumed to be finite dimensional unless otherwise stated.

\medskip
\noindent {\em Caveat on notation.} In this section,  by
the $i$-th homotopy group $\pi_n\bbG$ of a 
topological crossed-module $\bbG=[\partial\: G_1 \to G_0]$ we mean the $i$-th homotopy 
of the simplicial space associated to it (equivalently, the $i$-th homotopy 
of the quotient stack $G_0/G_1]$). When $i=0,1$, this should not be confused with the
usage of $\pi_0$ and $\pi_1$ for $\coker\partial$ and $\ker\partial$; the two notations
agree only when $G_0$ and $G_1$ are discrete groups. 

\medskip
 
Recall that a map $f \: X \to Y$ of topological spaces is $n$-connected
if $\pi_i f \: \pi_i X \to \pi_i Y$ is an isomorphism of $i \leq n$ and  a surjection for $i=n+1$. 
  
 \begin{prop}{\label{P:connected}} Let $\bbG=[G_1\to G_0]$ be a topological crossed-module
  and $n\geq 0$ an integer.  The following  are equivalent:
    \begin{itemize}
      \item[i)] The map $\partial$ is $(n-1)$-connected; 
      \item[ii)] The quotient stack $\mathcal{G}:=[G_0/G_1]$ is $n$-connected (in the sense
         of \cite{Foundations}, $\S$17);
      \item[iii)] The classifying space of $\mathcal{G}$ is $n$-connected. (We are
      viewing $\mathcal{G}$ as   a  stack and ignoring its group structure.)
    \end{itemize}
 \end{prop}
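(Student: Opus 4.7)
The plan is to compute $\pi_i(B\mathcal{G})$ via a standard principal fibration for the quotient stack and then read off all three conditions from the associated long exact sequence. Presenting $\mathcal{G}=[G_0/G_1]$ as the homotopy quotient $G_0/\!/G_1$, in which $G_1$ acts on $G_0$ through $\partial$, yields the fibration
$$G_0 \longrightarrow B\mathcal{G} \longrightarrow BG_1.$$
Using $\pi_k(BG_1)=\pi_{k-1}(G_1)$, the corresponding long exact sequence reads
$$\cdots \to \pi_i(G_1) \xrightarrow{\partial_*} \pi_i(G_0) \to \pi_i(B\mathcal{G}) \to \pi_{i-1}(G_1) \xrightarrow{\partial_*} \pi_{i-1}(G_0) \to \cdots$$
with connecting homomorphism induced by $\partial$ itself.

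Granted this setup, (ii) $\Leftrightarrow$ (iii) is essentially a citation: in \cite{Foundations}, $\S$17, $n$-connectedness of a topological stack is defined directly in terms of its classifying space, and once one views $\mathcal{G}$ as a bare stack this is precisely the assertion of (iii). For (i) $\Leftrightarrow$ (iii), exactness at $\pi_i(B\mathcal{G})$ shows that $\pi_i(B\mathcal{G})=0$ is equivalent to surjectivity of $\partial_*\colon \pi_i(G_1)\to\pi_i(G_0)$ together with injectivity of $\partial_*\colon\pi_{i-1}(G_1)\to\pi_{i-1}(G_0)$. Demanding this for all $i\leq n$ is exactly the condition that $\partial_*$ be an isomorphism in degrees $\leq n-1$ and a surjection in degree $n$, which in the paper's conventions (recalled immediately before the proposition) is the statement that $\partial$ is $(n-1)$-connected.

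The main obstacle is really only bookkeeping. One must verify that the simplicial bar construction of the action groupoid $G_0/\!/G_1$ genuinely produces a classifying space sitting in the displayed fibration with connecting map $\partial_*$, and one must treat the low-degree range $i=0,1$ by hand, since there $\pi_0$ is only a pointed set and $\pi_1$ is only a group, so the meaning of exactness must be spelled out in terms of kernels and orbit sets. Both points are completely standard for simplicial models of group stacks; once they are settled, the long-exact-sequence argument above disposes of the three equivalences uniformly.
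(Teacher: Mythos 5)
Your argument is correct and is essentially the paper's own: the fibration $G_0 \to B\mathcal{G} \to BG_1$ you use is the delooping of the fibration of stacks $G_1 \to G_0 \to [G_0/G_1]$ that the paper invokes, and both produce the same long exact sequence with connecting map $\partial_*$, from which (i) $\Leftrightarrow$ (ii)/(iii) is read off exactly as you describe. The equivalence with (iii) is likewise disposed of by citation in the paper (via \cite{Homotopytype}, Theorem 10.5, which identifies the intrinsic homotopy groups of the stack with those of its classifying space), so no further comment is needed.
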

 
 \begin{proof}
  The equivalence of  (i) and (ii) follows from the homotopy fiber sequence
  applied to the fibration of stacks $G_1\to G_0 \to [G_0/G_1]$.
  The equivalence of (ii) and (iii) follows from \cite{Homotopytype}, Theorem 10.5.
 \end{proof}

 \begin{defn}{\label{D:connected}}
   We say that a Lie crossed-module $\bbG$ is {\bf $n$-connected} if it satisfies
   the equivalent conditions of Proposition \ref{P:connected}.
 \end{defn}

It follows from Proposition \ref{P:connected} that the notion of $n$-connected is
invariant under equivalence of Lie crossed-modules.
 
 \begin{rem} A   Lie crossed-module $\bbG$ is
   $2$-connected if and only if $\pi_i\partial \: \pi_iG_1 \to \pi_iG_0$
   is an isomorphism for $i=0,1$. This is because   $\pi_2$ of every (finite dimensional) 
   Lie group vanishes.
 \end{rem}
 
\subsection{Definition of the connected covers}{\label{SS:definition}}

In this subsection we define the $n$-th connected cover of a Lie crossed-module
for $n\leq 2$. In the
next section we prove that these definitions are functorial with respect to butterflies.
In particular, it follows that they are invariant under equivalence of Lie crossed-modules.
The discussion of this and the next section is valid for topological 
crossed-modules (and also for infinite-dimensional Lie crossed-modules) as well.

\medskip

\noindent{\bf The $0$-th connected cover of $\bbG$.}  
It is easy to see that a Lie 2-group $\mathcal{G}$
is connected if and only if it has a presentation by a Lie crossed-module $[G_1 \to G_0]$ 
with $G_0$ connected. (Proof: choose an atlas $\varphi \: G_0 \to \mathcal{G}$ such that
$\varphi$ is a homomorphism and $G_0$ is connected, and set 
$G_1:=\{1_{\mathcal{G}}\}\times_{\mathcal{G},\varphi}G_0$.)
For a given Lie crossed-module  $\bbG=[G_1 \to G_0]$
its $0$-th connected cover is defined to be
      $$\bbG\ang{0}:=[\partial^{-1}(G_0^o) \to G_0^o],$$
where  $G^o$ stands for the connected component of the identity.
The crossed-module $\bbG\ang{0}$ should be thought of as the connected component
of the identity of $\bbG$. There is an obvious strict morphism 
$q_0 \: \bbG\ang{0} \to \bbG$ which induces  isomorphisms on $\pi_i$ for $i\geq 1$
(see Proposition \ref{P:cover}).
 
\medskip
\noindent{\bf The $1$-st connected cover of $\bbG$.} 
A Lie 2-group $\mathcal{G}$
is $1$-connected if and only if it has a presentation by a Lie crossed-module $[G_1 \to G_0]$ with
$G_0$ 1-connected and $G_1$ connected.
(Proof: choose an atlas $\varphi \: G_0 \to \mathcal{G}$ such that
$\varphi$ is a homomorphism and $G_0$ is 1-connected, and set 
$G_1:=\{1_{\mathcal{G}}\}\times_{\mathcal{G},\varphi}G_0$.)
For a given Lie crossed-module  $\bbG=[G_1 \to G_0]$
its $1$-st connected cover is defined to be
         $$\bbG\ang{1}:=[L^o \to  \widetilde{G_0^o}],$$ 
where $L := G_1\times_{G_0}\widetilde{G_0^o}$ and \,$\tilde{}$\, stands for universal cover.
There is an obvious strict morphism $q_1 \: \bbG\ang{1} \to \bbG$ which
factors through $q_0$ and  induces  isomorphisms on $\pi_i$ for $i\geq 2$
(see Proposition \ref{P:cover}).

\medskip
\noindent{\bf The $2$-nd connected cover of $\bbG$.}
A Lie 2-group  $\mathcal{G}$
is $2$-connected if and only if it has a presentation by a Lie crossed-module $[G_1 \to G_0]$ with
$G_0$ and $G_1$  both 1-connected.
(Proof: choose an atlas $\varphi \: G_0 \to \mathcal{G}$ such that
$\varphi$ is a homomorphism and $G_0$ is 1-connected, and set 
$G_1:=\{1_{\mathcal{G}}\}\times_{\mathcal{G},\varphi}G_0$.)
For a given Lie crossed-module  $\bbG=[G_1 \to G_0]$
its $2$-nd connected cover is defined to be
      $$\bbG\ang{2}:=[\widetilde{L^o} \to  \widetilde{G_0^o}],$$
where $L$ is as in the previous part. 
There is an obvious strict morphism $q_2 \: \bbG\ang{2} \to \bbG$ which
factors through $q_1$ and  induces  isomorphisms on $\pi_i$ for $i\geq 3$
(see Proposition \ref{P:cover}).

 \begin{rem}
  Note that a $1$-connected Lie group is automatically $2$-connected. The same is not
  true for Lie 2-groups.
 \end{rem}
 
\subsection{Uniform definition of the $n$-connected covers}{\label{SS:uniform}}

In order to be avoid repetition in the constructions and arguments given in the next section,
we phrase the definition of  $\bbG\ang{n}$ in a uniform manner for $n=0,1,2$, and single out the
main properties of the connected covers $q_n \: G\ang{n} \to G$ which will be needed
in the next section.\footnote{Apart from improving the clarity of proofs in the next 
section, there is another purpose for singling out properties of
connected covers in the form of axioms $\bigstar$: in contexts other than Lie crossed-modules,
it may be possible to arrange for the axioms $\bigstar$ for, say, other values of $n$, or 
by using different constructions for $G\ang{n}$. In such cases, our proofs apply verbatim.} 
Our discussion will be valid for topological crossed-modules (and also for infinite-dimensional 
Lie crossed-modules) as well.
 
First off, we need  functorial $n$-connected covers 
$q_n\: G\ang{n} \to G$ for $n=0,1,2$.\footnote{This, in fact, can be arranged 
for any $n$ in the category of  topological groups.}  
We set $G\ang{-1}=G$. We take $G\ang{0}:=G^o$ and
$G\ang{1}=G\ang{2}=\widetilde{G^o}$, where $G^o$ means connected component of the
identity. (In the case where $G$ is a topological group, or an infinite dimensional Lie
group, one has to make a different choice for $G\ang{2}$; see Remark \ref{R:contractible}.)

For a crossed-module
$\bbG=[G_1 \to G_0]$ we define $\bbG\ang{n}$ to be 
       $$\bbG\ang{n}:=[\partial \:L\ang{n-1} \to G_0\ang{n}],$$
where  $L :=G_1\times_{G_0,q_n}G_0\ang{n}$, and $\partial=\pr_2\circ q_{n-1}$.  
The action of $G_0\ang{n}$ on $L\ang{n-1}$
is defined as follows. There is an  action of $G_0\ang{n}$ on $L$ defined
componentwise (on the first
component it is obtained, via $q_n$, from the action of $G_0$ on $G_1$
and on  the second component it is given by right conjugation). By functoriality
of the $n$-th connected cover construction (applied to $L$), 
this action lifts to $L\ang{n-1}$. For $\bbG\ang{n}$ to be a crossed-module, we use the following
property:
  \begin{itemize}
   \item[($\bigstar$0)] For every $x \in  G\ang{n-1}$, the action of $q_{n-1}(x) \in G$ on 
    $G\ang{n-1}$ obtained (by functoriality) from  the conjugation action of $q_{n-1}(x)$ on $G$
    is equal to conjugation by $x$.
  \end{itemize}

There is a strict morphism of crossed-modules $q_n \: \bbG\ang{n} \to \bbG$
defined by
    $$\xymatrix@C=14pt@R=8pt@M=6pt{ L\ang{n-1}  \ar[rr]^(0.55){\pr_1 \circ q_{n-1}} 
      \ar[dd]_{\partial} 
                              & & G_{1}   \ar[dd]^{\partial} \\
                                           &     & \\
                       G_0\ang{n} \ar[rr]_{q_n} & & G_0       }$$
  
We will also need the following property:
  
\begin{itemize}
  \item[($\bigstar$1)] The map $q_{n-1} \: G\ang{n-1} \to G$  admits
  local sections near every point in its image (hence is a fibration with open-closed image).   
 \end{itemize} 

 \begin{prop}{\label{P:cover}} For $i\leq n$, we have $\pi_i(\bbG\ang{n})=\{0\}$.
  For $i \geq n+1$ the morphism $q_n \: \bbG\ang{n} \to \bbG$ induces isomorphisms 
  $\pi_i(q_n) \: \pi_i(\bbG\ang{n}) \to \pi_i(\bbG)$.
 \end{prop}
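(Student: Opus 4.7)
My plan is to compare the long exact sequences of homotopy groups associated to the stack fibrations $L\ang{n-1}\to G_0\ang{n}\to \mathcal{G}\ang{n}$ and $G_1\to G_0\to \mathcal{G}$, which for $\bbH=[H_1\to H_0]$ read
$$\cdots\to\pi_iH_1\to\pi_iH_0\to\pi_i\bbH\to\pi_{i-1}H_1\to\cdots$$
(cf.\ Proposition \ref{P:connected}), and to compare them via the strict morphism $q_n$. The vanishing of $\pi_i\bbG\ang{n}$ will come from connectivity built into the construction, and the isomorphism on higher homotopy will follow from the five lemma together with a diagram chase at the boundary.

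For the vanishing, the construction in \S\ref{SS:uniform} exhibits $G_0\ang{n}$ as the functorial $n$-connected cover of the Lie group $G_0$ and $L\ang{n-1}$ as the functorial $(n-1)$-connected cover of the Lie group $L$; the case $n=2$ uses that $\pi_2$ of a finite-dimensional Lie group vanishes, so that $\widetilde{G_0^o}$ is genuinely $2$-connected. Hence $\pi_iG_0\ang{n}=0$ for $i\leq n$ and $\pi_iL\ang{n-1}=0$ for $i\leq n-1$, and the LES segment $\pi_iG_0\ang{n}\to\pi_i\bbG\ang{n}\to\pi_{i-1}L\ang{n-1}$ has vanishing endpoints for every $i\leq n$, yielding $\pi_i\bbG\ang{n}=0$.

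For the isomorphism on $\pi_i$ with $i\geq n+1$, I apply the five lemma to the ladder of long exact sequences induced by $q_n$. By construction, $\pi_iq_n\colon\pi_iG_0\ang{n}\to\pi_iG_0$ is an isomorphism for $i\geq n+1$. The vertical map on the other column factors as $L\ang{n-1}\xrightarrow{q_{n-1}}L\xrightarrow{\pr_1}G_1$, with the first arrow an isomorphism on $\pi_i$ for $i\geq n$ by functoriality of Lie group connected covers, and, using property $(\bigstar 1)$, the second arrow a covering of the open subgroup $\partial^{-1}(\im q_n)\subset G_1$ with discrete fibre, hence an isomorphism on $\pi_i$ for $i\geq 2$ (and for $i\geq 1$ in the degenerate case $n=0$, when it is just an open inclusion). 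For $i>n+1$ all the relevant vertical maps in the five-term window are isomorphisms and the five lemma gives $\pi_i\bbG\ang{n}\risom\pi_i\bbG$.

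The boundary case $i=n+1$ is the main technical obstacle: for $n=0$ and $n=1$ the map $\pi_nL\ang{n-1}\to\pi_nG_1$ is only injective, with image $\ker(\pi_nG_1\to\pi_nG_0)$. Using $\pi_nG_0\ang{n}=0$, the LES of $\bbG\ang{n}$ truncates to a short exact sequence with quotient $\pi_nL\ang{n-1}$; by exactness of the LES of $\bbG$, the image of $\pi_{n+1}\bbG\to\pi_nG_1$ also equals $\ker(\pi_nG_1\to\pi_nG_0)$, matching the image of $\pi_nL\ang{n-1}$. Combined with the isomorphism $\pi_{n+1}G_0\ang{n}\cong\pi_{n+1}G_0$ on the left column, a short direct diagram chase then identifies $\pi_{n+1}\bbG\ang{n}$ with $\pi_{n+1}\bbG$ via $q_n$. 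The essential point is that exactness of the LES, rather than naturality of connected covers alone, must be invoked when $\pr_1$ fails to be surjective on $\pi_n$.
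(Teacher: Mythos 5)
Your argument is correct and takes essentially the same route as the paper: the commutative diagram of fibrations $L\ang{n-1}\to G_0\ang{n}\to\bbG\ang{n}$ over $G_1\to G_0\to\bbG$, the associated long exact homotopy sequences, and the five lemma (using, as the paper does via ($\bigstar$1), that $L\to G_1$ is a fibration with the same fibre as $q_n\: G_0\ang{n}\to G_0$). Your explicit handling of the boundary degree $i=n+1$, where $\pi_nL\ang{n-1}\to\pi_nG_1$ is only injective with image $\ker(\pi_nG_1\to\pi_nG_0)$ and the five lemma must be supplemented by a direct chase, correctly fills in a step the paper's one-line proof leaves implicit.
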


 \begin{proof}
  Consider the commutative diagram
      $$\xymatrix@C=22pt@R=22pt@M=6pt{
        L\ang{n-1} \ar[r]^{\partial}  \ar[d]_{\pr_1 \circ q_{n-1}} 
         & G_0\ang{n} \ar[r]  \ar[d]^{q_n} & 
              \bbG\ang{n}  \ar[d]^{q_n}  \\
        G_1 \ar[r]_{\partial} & G_0 \ar[r] & \bbG
      }$$
  Both rows are fibrations of crossed-modules (so, induce fibrations on the classifying spaces).
  The first claim follows by applying the fiber homotopy exact sequence to the first row.
  For the second claim
  use the fact that $L \to G_1$ is a fibration (because of $\bigstar$1) with the same fiber as
  $q_n \:  G_0\ang{n} \to G_0$, and apply the fiber homotopy exact sequence to the 
  two rows of the above diagram (together with Five Lemma). 
 \end{proof}
 
 \begin{rem}{\label{R:contractible}}
    In the definition of $\bbG\ang{n}=[L\ang{n-1}\to G_0\ang{n}]$, the fact that $G_0\ang{n}$
    is an $n$-connected cover of $G_0$ is not really needed. All we need is to have a functorial
    replacement $q\: G' \to G$ such that $q$ is a fibration and $\pi_iG'$ is trivial
    for $i\leq n$. For instnace, we could take $G'$ to be the group $\operatorname{Path}_1(G)$
    of paths originating at 1. (In the finite dimensional Lie
    context, however, this would not be a suitable choice as $\operatorname{Path}_1(G)$ is infinite
    dimensional. That is why we chose $G\ang{2}:=\widetilde{G^o}$ instead.)
 \end{rem}
 
\section{Functorial properties of connected covers}{\label{S:Functoriality}}

For $n\leq 2$ we prove that our definition of the $n$-th connected cover 
$\bbG\ang{n}$
of a Lie crossed-modules  is functorial
in Lie butterflies and satisfies the expected adjunction property (Proposition \ref{P:factor}). 
We will need the following property of the connected covers:

 \begin{itemize}
  \item[($\bigstar$2)] For any homomorphism $f \: H \to G$,
   such that $\pi_if \: \pi_iH \to \pi_iG$ is an isomorphism for $0\leq i\leq n-1$, the 
   diagram
         $$\xymatrix@C=8pt@R=6pt@M=6pt{ H\ang{n-1}  
             \ar[rr]^{f\ang{n-1}}\ar[dd]_{q_{n-1}}
                              & & G\ang{n-1}    \ar[dd]^{q_{n-1}}  \\
                                           &     & \\
                       H \ar[rr]_f & & G       }$$     
   is cartesian.
 \end{itemize}

\subsection{Construction of the $n$-th connected cover functor}{\label{SS:construction}} 

Consider the Lie butterfly $B \: \bbH \to \bbG$,
    $$\xymatrix@C=8pt@R=6pt@M=6pt{ H_1 \ar[rd]^{\kappa} \ar[dd]
                          & & G_1 \ar[ld]_{\iota} \ar[dd] \\
                            & E \ar[ld]^{\sigma} \ar[rd]_{\rho}  & \\
                                       H_0 & & G_0       }$$
The butterfly  $B\ang{n} \:  \bbH\ang{n} \to  \bbG\ang{n}$
is defined to be the diagram 
    $$\xymatrix@C=8pt@R=6pt@M=6pt{  L_H\ang{n-1} \ar[rd]^{\kappa_n}  \ar[dd]
                          & &  L_G\ang{n-1} \ar[ld]_{\iota_n}  \ar[dd] \\
                            & F\ang{n-1} \ar[ld]^{\sigma_n} \ar[rd]_{\rho_n} & \\
                                       H_0\ang{n} & & G_0\ang{n}       }$$
Let us explain what the terms appearing in this diagram are. The groups 
$L_G$ and $L_H$ are what we
called $L$ in the definition of the $n$-connected cover (see $\S$ \ref{SS:uniform}}).
For example, $L_H =H_1\times_{H_0}H_0\ang{n}$. The Lie group $F$
appearing in the center of the butterfly is defined to be
      $$F:=H_0\ang{n}\times_{H_0} E \times_{G_0} G_0\ang{n}.$$
The maps $\rho_n$ and $\sigma_n$ are obtained by composing $q_{n-1} \:  F\ang{n-1}
\to F$ with the corresponding projections.   The map $\kappa_n$ is obtained by applying
the functoriality of $(-)\ang{n-1}$ to $(\pr_2,\kappa\circ\pr_1, 1) \: L_H \to F$. 
Definition of $\iota_n$ is 
less trivial and is given in the next paragraphs. We need to show that the kernel of
$\sigma_n \: F\ang{n-1} \to H_0\ang{n}$ is naturally isomorphic to $L_G\ang{n-1}$.

There is an equivalent way of defining $F$ which is somewhat more illuminating. Set 
           $$K:=H_0\ang{n}\times_{H_0} E.$$
Let $\sigma' \: K \to H_0\ang{n}$ be the  first projection map and
$\rho ' \: K \to G_0$  the second projection map composed with $\rho$.
Then,
       $$F= K\times_{\rho', G_0} G_0\ang{n}.$$
Now, observe that we have a short exact
sequence
       $$1 \to  G_1 \llra{\alpha} K \llra{\sigma'} H_0\ang{n} \to 1.$$  
Therefore, we have a cartesian diagram
       $$\xymatrix@C=10pt@R=6pt@M=6pt{  L_G 
             \ar @{^(->} [rr]^{\beta} \ar[dd]_{\pr_1}  
                              & & F \ar[dd]^{\pr_1}   \\
                                           &     & \\
                       G_1 \ar@{^(->}[rr]_{\alpha} & & K       }$$ 
and the sequence
         $$1 \to  L_G \llra{\beta} F \llra{\sigma'\circ\pr_1} H_0\ang{n} \to 1$$                 
is short exact.   (Exactness at the right end follows from  ($\bigstar$1) 
and the fact that $H_0\ang{n}$ is connected.)     
A homotopy fiber sequence argument applied to this short exact sequence shows that
$\alpha$ induces isomorphisms
$\pi_i G_1 \to \pi_i K$, for $0\leq i < n$.  
By  ($\bigstar$2) we   have a cartesian diagram
                $$\xymatrix@C=12pt@R=6pt@M=6pt{  L_G\ang{n-1}
             \ar@{^(->}[rr]^{\beta \ang{n-1}} \ar[dd]_{q_{n-1}}  
                              & & F\ang{n-1} \ar[dd]^{q_{n-1}}   \\
                                           &     & \\
                       L_G \ar@{^(->} [rr]_{\beta} & & F       }$$ 
Therefore, 
          $$1 \to  L_G \ang{n-1} \llra{\beta\ang{n-1}} F\ang{n-1} 
              \llra{\sigma_n} H_0\ang{n} \to 1$$  
is short exact, where $\sigma_n:=\sigma'\circ\pr_1\circ q_{n-1}$. 
(Exactness at the right end follows from  ($\bigstar$1) and the fact that $H_0\ang{n}$
is connected.)     
Setting $\iota_n:=\beta\ang{n-1}$ completes the construction of
our butterfly diagram. The equivariance axioms for this butterfly follow  from the
functoriality  of the $(n-1)$-th connected cover.

 \begin{rem}
    In the case where we have a strict morphism $f \: \bbH \to \bbG$, we can
    define a natural strict morphism 
    $f\ang{n} \: \bbH\ang{n} \to \bbG\ang{n}$
    componentwise. It is natural to ask whether this morphism coincides with the one
    we constructed above using butterflies. The answer is yes. The proof uses the
     the following property of connected covers:
  \begin{itemize}
   \item[($\bigstar$3)]  If $G$ is an $n$-connected group acting on $H$,
      then $(\id,q_{n-1}) \: G\ltimes H\ang{n-1} \to G\ltimes H$ is the $(n-1)$-connected
      cover of $G\ltimes H$. That is, the map $(\id,q_{n-1}) $ is isomorphic to the $q_{n-1}$ 
      map of $G\ltimes H$.
  \end{itemize}
 \end{rem}

\subsection{Effect on the composition of butterflies}{\label{SS:composition}} 

The proof that the construction of the previous subsection respects composition of butterflies
is somewhat intricate.  We will only consider Lie butterflies and assume that $0\leq n\leq 2$, but 
the exact same proofs apply verbatim to topological butterflies
(and also to infinite dimensional Lie butterflies). We begin with a few lemmas. 

 \begin{lem}{\label{L:cartesian}}
  Let $m\geq 0$ be an integer.
  Consider a homotopy cartesian diagram of topological spaces
           $$\xymatrix@C=8pt@R=6pt@M=6pt{ X
             \ar[rr]^h \ar[dd] 
                              & & Y    \ar[dd]^f   \\
                                           &     & \\
                       Z \ar[rr]_g & & W       }$$     
  Suppose that $W$ is $(m+1)$-connected and $Z$ is $m$-connected. Then, $h$ induces
  isomorphisms $\pi_ih \: \pi_iX \to \pi_iY$ for $i \leq m$.
 \end{lem}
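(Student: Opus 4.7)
The plan is a short homotopy long exact sequence argument, with the only subtlety being the non-abelian end at $i=0$.

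First, I would exploit the homotopy-cartesian hypothesis to identify the homotopy fiber of $h$ with the homotopy fiber of $g$. Call this common space $F$; by a standard base-point argument (any two fibers of a fibration over a connected base are weakly equivalent, and $W$ is connected since $m+1\geq 1$), this identification is independent of base-point choice up to weak equivalence.

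The second step is to show $F$ is $m$-connected. Applying the long exact sequence to the fibration $F\to Z\to W$ gives, for each $i$,
$$\pi_{i+1}W\lra \pi_iF\lra \pi_iZ\lra \pi_iW.$$
When $i\leq m$ we have $\pi_iZ=0$ (by $m$-connectedness of $Z$) and $\pi_{i+1}W=0$ (since $i+1\leq m+1$ and $W$ is $(m+1)$-connected), so exactness forces $\pi_iF=0$. This gives $m$-connectedness of $F$ in all positive degrees, and for $i=0$ one uses the same sequence of pointed sets: $\pi_0F$ sits between $\pi_1W=0$ and $\pi_0Z=\ast$, so $\pi_0F=\ast$.

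Third, apply the long exact sequence of the fibration $F\to X\to Y$:
$$\pi_{i+1}Y\lra \pi_iF\lra \pi_iX \llra{\pi_ih}\pi_iY\lra \pi_{i-1}F.$$
For $1\leq i\leq m$ both flanking terms $\pi_iF$ and $\pi_{i-1}F$ vanish, and $\pi_ih$ is an isomorphism of groups by exactness. At $i=0$, injectivity of $\pi_0h$ follows from $\pi_0F=\ast$ via exactness at $\pi_0X$. Surjectivity of $\pi_0h$ is the one place that is not formal from the sequence ending at $\pi_0Y$: I would argue directly using the homotopy-cartesian description $X\simeq Z\times_W^h Y$. For any $y\in Y$, the fiber of $h$ over $y$ is the homotopy fiber of $g$ at $f(y)$, which is weakly equivalent to $F$ (because $W$ is connected). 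Since $F$ is $0$-connected, in particular non-empty, every component of $Y$ is hit by $h$.

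The only mildly delicate point, and where I would be most careful, is this last $\pi_0$-surjectivity, since it requires stepping outside the bare long exact sequence. Everything else is a direct bookkeeping of vanishing ranges.
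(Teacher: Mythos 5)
Your proof is correct and follows essentially the same route as the paper's: the paper's (very terse) proof likewise identifies the homotopy fiber of $h$ with that of $g$, deduces its $m$-connectedness from the connectivity of $Z$ and $W$, and concludes via the homotopy fiber exact sequence. You simply spell out the low-degree $\pi_0$ bookkeeping that the paper leaves implicit.
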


 \begin{proof}
  The connectivity assumptions on $Z$ and $T$ imply that the homotopy fiber of $g$ is
  $m$-connected. Since the diagram is homotopy cartesian, the same is true for the
  homotopy fiber of $h$. A homotopy fiber exact sequence implies the claim.
 \end{proof}

 \begin{cor}{\label{C:cartesian}}
  Let $f \: Y \to W$ and $g \: Z \to W$ be homomorphisms of Lie groups and suppose that $W$ is
  $(m+1)$-connected. Suppose that either $f$ or $g$ is a fibration (e.g., surjective).
  Then, we have natural isomorphisms
     $$Z\ang{m} \times_W Y\ang{m}  \cong \big(Z\ang{m} \times_W Y)\ang{m}
               \cong\big(Z\times_W Y\ang{m} )\ang{m}.$$
  In particular, all three groups are $m$-connected.
 \end{cor}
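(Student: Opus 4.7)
The plan is to combine the homotopy long exact sequence (in the spirit of Lemma \ref{L:cartesian}) with axiom $(\bigstar 2)$. I will establish the isomorphism $(Z\times_W Y\ang{m})\ang{m}\cong Z\ang{m}\times_W Y\ang{m}$ in detail; the isomorphism $(Z\ang{m}\times_W Y)\ang{m}\cong Z\ang{m}\times_W Y\ang{m}$ follows by swapping the roles of $Y$ and $Z$ and repeating the argument.

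First I check that the fibre products in question are homotopy pullbacks. Since $q_m\: Y\ang{m}\to Y$ is a fibration by $(\bigstar 1)$ and either $f$ or $g$ is assumed to be a fibration, one of the two maps defining each square under consideration is a fibration, so the smooth fibre products coincide with honest homotopy pullbacks.

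The crux is to verify that the first projection $p\: Z\times_W Y\ang{m}\to Z$ induces an isomorphism on $\pi_i$ for $0\leq i\leq m$. The homotopy fibre $F$ of $p$ agrees with the homotopy fibre of the composition $f\circ q_m\: Y\ang{m}\to W$, so from the long exact sequence
$$\cdots\to\pi_{i+1}W\to\pi_iF\to\pi_iY\ang{m}\to\pi_iW\to\cdots$$
together with $\pi_{i+1}W=0$ for $i\leq m$ (from the $(m+1)$-connectedness of $W$) and $\pi_iY\ang{m}=0$ for $i\leq m$ (Proposition \ref{P:cover}), we conclude $\pi_iF=0$ for $0\leq i\leq m$, which gives the claim. (This is essentially a direct application of Lemma \ref{L:cartesian}.)

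Applying axiom $(\bigstar 2)$ with $n=m+1$ to the homomorphism $p$ then yields the cartesian square
$$\xymatrix@C=18pt@R=10pt@M=4pt{ (Z\times_W Y\ang{m})\ang{m}\ar[r]\ar[d] & Z\ang{m}\ar[d]^{q_m} \\ Z\times_W Y\ang{m}\ar[r]_(0.6){p} & Z,}$$
which identifies $(Z\times_W Y\ang{m})\ang{m}$ with $Z\ang{m}\times_Z(Z\times_W Y\ang{m})=Z\ang{m}\times_W Y\ang{m}$. The ``in particular'' statement is then immediate, since the object $Z\ang{m}\times_W Y\ang{m}$ has just been exhibited as an $m$-connected cover and is therefore $m$-connected by Proposition \ref{P:cover}. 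The only real point to watch along the way is the bookkeeping that each of the squares invoked is genuinely homotopy cartesian, which rests on the fibration hypothesis on $f$ or $g$ together with $(\bigstar 1)$.
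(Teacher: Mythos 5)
Your proof is correct and takes essentially the same route as the paper's: the paper treats the isomorphism $(Z\ang{m}\times_W Y)\ang{m}\cong Z\ang{m}\times_W Y\ang{m}$ by applying Lemma \ref{L:cartesian} to the projection $Z\ang{m}\times_W Y\to Y$ and then invoking ($\bigstar$2), which mirrors exactly your treatment of the symmetric projection $Z\times_W Y\ang{m}\to Z$ (your long-exact-sequence computation simply unwinds the proof of Lemma \ref{L:cartesian}), with the remaining isomorphism obtained by symmetry in both cases.
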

 
 \begin{proof}
   We prove the first equality. Apply Lemma \ref{L:cartesian} to the  diagram
               $$\xymatrix@C=10pt@R=8pt@M=6pt{ X
                     \ar[rr]^h \ar[dd]    & & Y    \ar[dd]^f   \\
                                                   &      & \\
                       Z\ang{m} \ar[rr]_{g\circ q_m} & & W       }$$  
   where $X:=Z\ang{m} \times_W Y$.     The diagram is homotopy cartesian because
   either $f$ or $g\circ q_m$ is a fibration.   Now apply ($\bigstar$2) to $h=\pr_2 \: 
   Z\ang{m} \times_W Y \to Y$.   
 \end{proof}
 
The next lemma is the technical core of this subsection.

 \begin{lem}{\label{L:diamond}}
   Consider the commutative diagram
                $$\xymatrix@C=8pt@R=6pt@M=6pt{ X   \ar[rr]^h \ar[dd]_k 
                                    &      & Y    \ar[dd]^f   \\  
                                     &     & \\
                       Z \ar[rr]_g & & W       }$$    
   of Lie groups. Suppose that $W$ acts on $X$ so that $[f\circ h\:X \to W]$ is a Lie crossed-module.
   Also, suppose that the induced action of $Y$ on $X$  via   $f$ makes the map $h$ 
   $Y$-equivariant      (the action of $Y$ on itself being the
   right conjugation). Assume the same thing
   for the induced action of $Z$ on $X$ via $g$.   Suppose that $W$
   is $(m+1)$-connected, $f$ is surjective, and that $k$ is closed injective normal 
   with $(m+1)$-connected cokernel.  Then, the sequence
     $$\xymatrix@C=12pt@R=8pt@M=6pt{
        1 \ar[r] & X \ang{m} \ar[rr]^(0.35){(k\ang{m},h\ang{m})} 
         && Z\ang{m} \times_W Y\ang{m} 
              \ar[rr]^{u} & & 
       \big(Z\oux{W}{X}Y\big)\ang{m}  \ar[r] & 1 }$$
    is short exact. Here,
    $u$ is the composition $(q_m,\id)\ang{m}\circ \phi$, where $\phi \:  
    Z\ang{m} \times_W Y\ang{m} \risom  
    (Z\ang{m} \times_W Y)\ang{m}$ is the isomorphism of Corollary \ref{C:cartesian}.
    (For the definition of $Z\oux{W}{X}Y$ see the end of $\S$\ref{S:Weak}.). In
    other words, we have a natural isomorphism
         $$Z\ang{m}\oux{W}{X\ang{m}}Y\ang{m} 
            \cong \big(Z\oux{W}{X}Y\big)\ang{m}.$$ 
 \end{lem}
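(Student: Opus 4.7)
The plan is to realize the sought sequence as the image under $(-)\langle m\rangle$ of the short exact sequence
\[
1 \to X \to Z\times_W Y \to Z\oux{W}{X}Y \to 1,
\]
which is exact since $X$ is normal in $Z$ by the crossed-module hypothesis and hence $(k,h): X\hookrightarrow Z\times_W Y$ is a closed normal embedding. First I would construct the left-hand injection: since $Z/X$ is $(m+1)$-connected, the long exact sequence of the fibration $X\to Z\to Z/X$ forces $\pi_i k$ to be an isomorphism for $0\le i\le m$, so axiom $(\bigstar 2)$ with $n=m+1$ identifies $X\langle m\rangle$ with the pullback $X\times_Z Z\langle m\rangle$. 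Thus $k\langle m\rangle$ is the pullback of the closed normal injection $k$, and composing with $h\langle m\rangle$ on the second factor yields the closed normal embedding $(k\langle m\rangle,h\langle m\rangle): X\langle m\rangle \hookrightarrow Z\langle m\rangle \times_W Y\langle m\rangle$, with normality transported by $(-)\langle m\rangle$ via $(\bigstar 0)$.

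Next, Corollary~\ref{C:cartesian} applied to $Y\to W\leftarrow Z$ (using that $f$ is surjective hence a fibration, and $W$ is $(m+1)$-connected) gives the isomorphism $\phi: Z\langle m\rangle \times_W Y\langle m\rangle \risom (Z\langle m\rangle \times_W Y)\langle m\rangle$, exhibiting the middle term of the target sequence as $m$-connected. I would then form the quotient $Q := (Z\langle m\rangle \times_W Y\langle m\rangle)/X\langle m\rangle$, a Lie group since $X\langle m\rangle$ is closed normal. The long exact sequence of the fibration $X\langle m\rangle \to Z\langle m\rangle \times_W Y\langle m\rangle \to Q$, together with the $m$-connectedness of the first two terms, forces $Q$ to be $m$-connected. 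The obvious surjection $Z\langle m\rangle \times_W Y\langle m\rangle \to Z\oux{W}{X}Y$ descends to a map $Q \to Z\oux{W}{X}Y$, and by the universal property of the $m$-connected cover it lifts uniquely to $\tilde u: Q \to (Z\oux{W}{X}Y)\langle m\rangle$; a direct check shows $\tilde u$ coincides with the map $u$ in the statement.

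The main obstacle is showing $\tilde u$ is an isomorphism. I would compare the two fibration sequences $X\langle m\rangle \to Z\langle m\rangle\times_W Y\langle m\rangle \to Q$ and $X \to Z\times_W Y \to Z\oux{W}{X}Y$ via the $q_m$ maps on each column, and apply the Five Lemma to their long exact sequences in degrees $\geq m+1$. The first column is an isomorphism on $\pi_{\geq m+1}$ by the cartesian description of $X\langle m\rangle$ from the first paragraph; the second column is an isomorphism on $\pi_{\geq m+1}$ because it factors as $\phi$ followed by the $m$-cover of $(q_m,\id): Z\langle m\rangle \times_W Y \to Z\times_W Y$, whose kernel is the discrete group $\ker q_m$ and which is therefore a $\pi_{\geq 1}$-isomorphism onto its image. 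The Five Lemma then yields that the third column is an isomorphism on $\pi_{\geq m+1}$, which combined with the $m$-connectedness of $Q$ identifies $Q\cong (Z\oux{W}{X}Y)\langle m\rangle$ via $\tilde u = u$. The delicate point is verifying naturality of the connecting homomorphisms in the two long exact sequences; the $(m+1)$-connectedness of both $W$ and of $Z/X$ is essential to keep everything aligned in the relevant degrees.
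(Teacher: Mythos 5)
Your overall strategy is the same as the paper's up to the midpoint: start from the defining short exact sequence $1\to X\to Z\times_W Y\to Z\oux{W}{X}Y\to 1$, use ($\bigstar$2) applied to $k$ (via the $(m+1)$-connectivity of $\coker k$) to identify $X\ang{m}$ with a pullback, and use Corollary \ref{C:cartesian} for the middle term. Where you diverge is the endgame. The paper applies ($\bigstar$2) a \emph{second} time, to the fibration $(q_m,\id)\:Z\ang{m}\times_W Y\to I$ (where $I$ is the open--closed image), whose kernel $X\ang{m}$ is $m$-connected; the resulting cartesian square makes the top row short exact by pure pullback formalism, with no homotopy-group bookkeeping at the bottom degree. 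You instead form $Q=(Z\ang{m}\times_W Y\ang{m})/X\ang{m}$ and try to identify $Q$ with $\big(Z\oux{W}{X}Y\big)\ang{m}$ by a Five Lemma comparison with the original sequence.

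That Five Lemma step is where the gap is. In the comparison of the two long exact sequences, the first column is $q_m\:X\ang{m}\to X$, which is an isomorphism on $\pi_i$ only for $i\geq m+1$; at the critical degree $i=m+1$ the relevant five-term segment ends in $\pi_m(X\ang{m})=0\to\pi_m(X)$, which is not an isomorphism, so the Five Lemma does not apply as stated and surjectivity of $\pi_{m+1}(Q)\to\pi_{m+1}\big(Z\oux{W}{X}Y\big)$ is exactly what is at risk. The conclusion is still true, but only because $\pi_m(X)\to\pi_m(Z\times_W Y)$ is \emph{injective} (this follows from the $(m+1)$-connectivity of $\coker k$ composed with the projection to $Z$), which kills the boundary term $\ker\big(\pi_m(X)\to\pi_m(Z\times_W Y)\big)$ that would otherwise obstruct surjectivity; you never invoke this. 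Two smaller but real slips: the map $Z\ang{m}\times_W Y\ang{m}\to Z\oux{W}{X}Y$ is not surjective in general (its image is only an open subgroup containing the identity component, so you must pass to $I\ang{m}=\big(Z\oux{W}{X}Y\big)\ang{m}$ as the paper does); and a homomorphism with discrete kernel is an isomorphism on $\pi_i$ only for $i\geq 2$ (and injective on $\pi_1$), not a ``$\pi_{\geq1}$-isomorphism onto its image,'' though for the degrees $i\geq m+1$ you actually need this does not cause damage. The cleanest repair is to replace the Five Lemma comparison with the paper's second application of ($\bigstar$2).
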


 \begin{proof}
  We start with the short exact sequence
       $$ 1 \to X \to Z\times_W Y \to Z\oux{W}{X} Y \to 1,$$
  which is essentially the definition of  $Z\oux{W}{X} Y$.
  From it we construct the  exact sequence
   $$ 1 \to X\ang{m} \llra{\alpha} 
        Z\ang{m}\times_W Y \llra{(q_m,\id)} Z\oux{W}{X} Y $$
  with $\alpha \:     X\ang{m} \to Z\ang{m}\times_W Y$ is $(k\ang{m},h\circ q_m)$. 
  To see why this sequence is exact, we calculate the kernel of the homomorphism
  $(q_m,\id) \: Z\ang{m}\times_W Y \to Z\times_W Y$: 
      {\small $$X\times_{Z\times_W Y}\big( Z\ang{m}\times_W Y\big)\cong
          X\times_{Z\times_W Y} \big((Z\times_W Y) \times_Z Z\ang{m} \big)\cong X\times_Z
          Z\ang{m}\cong X\ang{m}.$$}
  For the first equality we have used
       $$ Z\ang{m}\times_W Y \cong (Z\times_W Y) \times_Z Z\ang{m}.$$
  For the last equality we have used ($\bigstar$2) for $k \: X \to Z$. (Note that, since
  $\coker k$ is $(m+1)$-connected, $k \: X \to Z$ induces isomorphisms
  on $\pi_i$ for all $i \leq m$.)

  Observe that the last map in the above sequence is a fibration with (open-closed) image $I
   \subseteq Z\oux{W}{X} Y$.
  This fibration has an $m$-connected kernel $X\ang{m}$, so, using the homotopy
  fiber exact sequence, we see that it induces isomorphisms on $\pi_i$ for $i\leq m$. By 
  ($\bigstar$2) we get the cartesian square  
       $$\xymatrix@C=18pt@R=6pt@M=6pt{
          (Z\ang{m} \times_W Y)\ang{m} \ar[dd]_{q_m} 
           \ar[rr]^{(q_m,\id)\ang{m}}
       && \big(Z\oux{W}{X}Y\big)\ang{m} \ar[dd]^{q_m} \\ 
          && \\
        Z\ang{m}\times_W Y \ar[rr] _{(q_m,\id)} &&  I  }$$
  (Note that $I\ang{m}=\big(Z\oux{W}{X}Y\big)\ang{m}$ because
  the $m$-th connected cover only depends on the connected 
  component of the identity, which is contained in $I$.) Precomposing the top
  row with the isomorphism $\phi \:  Z\ang{m} \times_W Y\ang{m} \risom  
  (Z\ang{m} \times_W Y)\ang{m}$
  of Corollary \ref{C:cartesian},  and calling the composition $u$ as in the statement of the
  lemma, we find the following commutative diagram in which  the square on the right is cartesian
     {\small            $$\xymatrix@C=12pt@R=6pt@M=6pt{
        1 \ar[rr]  &&  X \ang{m} \ar[rr]^(0.35){(k\ang{m},h\ang{m})} 
          \ar[dd]^{\id} &&   
        Z\ang{m} \times_W Y\ang{m} \ar[dd]^{(\id,q_m)}     \ar[rr]^{u}
         && \big(Z\oux{W}{X}Y\big)\ang{m} \ar[dd]^{q_m} \ar[rr] &&1 \\ 
         &&  && \\
        1\ar[rr] &&  X \ang{m}   \ar[rr]_(0.4){(k\ang{m},h\circ q_m)} &&  
         Z\ang{m}\times_W Y \ar[rr] _{(q_m,\id)}     &&  I  \ar[rr] && 1 }$$}
  Since the bottom row is short exact, so is the top row. Proof of the lemma is complete.       
 \end{proof}

We need one more technical lemma.

 \begin{lem}{\label{L:elementary}}
   Consider a commutative diagram
                $$\xymatrix@C=8pt@R=6pt@M=6pt{ X
             \ar[rr]^h \ar[dd]_k 
                              & & Y    \ar[dd]^f   \\
                                           &     & \\
                       Z \ar[rr]_g & & W       }$$    
   of topological groups. 
   Suppose that $W$ acts on $X$ so that $[f\circ h\:X \to W]$ is a topological crossed-module.
   Also, suppose that the induced action of $Y$ on $X$  via   $f$ makes the map $h$ 
   $Y$-equivariant (the action of $Y$ on itself being the right conjugation). Assume the same thing
   for the induced action of $Z$ on $X$ via $g$. Suppose that $f$ is surjective. Let 
   $\alpha \: W' \to W$ be a homomorphism with normal image, and denote its cokernel by
   $W_0$. Denote the pullback of the above diagram along $\alpha$ by adding  prime 
   superscripts. Denote the images of $X$ and $Z$ in $W_0$ by $X_0$ and $Z_0$,
   respectively. (Note that $X_0$ is normal in $W_0$.) Then, the sequence
       $$1  \to Z'\oux{W'}{X'}Y' \to Z\oux{W}{X}Y \to Z_0/X_0 \to 1$$
   is exact. In particular, if the image of $W'$ is open in $W$, then $Z'\oux{W'}{X'}Y'$
   is a union of connected components of $Z\oux{W}{X}Y$.
 \end{lem}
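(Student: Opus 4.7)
The plan is to construct explicitly the three maps in the sequence, verify exactness at each spot, and then deduce the openness statement from discreteness of $W_0$.

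First, I would define the quotient map $\pi \: Z\oux{W}{X}Y \to Z_0/X_0$ by $[z,y] \mapsto \overline{g(z)}$, the class of $g(z)$ in $Z_0/X_0 \subseteq W_0/X_0$. To check well-definedness on the quotient by $X$, note that for $x \in X$ the action replaces $z$ by $z\cdot k(x)$, and $g(k(x)) = f(h(x))$ lies in $f(h(X))$, whose image in $W_0$ is exactly $X_0$. Surjectivity of $\pi$ follows because, given a representative $\bar{z}_0$ in $Z_0/X_0$, we lift to some $z \in Z$; since $f$ is surjective, we can find $y \in Y$ with $f(y) = g(z)$, and then $[z,y]$ maps to $\bar z_0$.

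Next I would identify the kernel. An element $[z,y]$ lies in $\ker\pi$ iff $g(z) \in \alpha(W')\cdot f(h(X))$. Using the $X$-action, one can replace $(z,y)$ by $(z\cdot k(x)^{-1}, y\cdot h(x)^{-1})$ for a suitable $x \in X$ to arrange $g(z) \in \alpha(W')$; then $z$ lifts canonically to $Z' = Z\times_W W'$, and the relation $f(y) = g(z)\in\alpha(W')$ lifts $y$ to $Y' = Y\times_W W'$. So the new $(z,y)$ comes from $Z'\times_{W'}Y'$, which shows exactness at $Z\oux{W}{X}Y$.

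For injectivity of $Z'\oux{W'}{X'}Y' \to Z\oux{W}{X}Y$, suppose two elements $(z'_1,y'_1)$ and $(z'_2,y'_2)$ of $Z'\times_{W'}Y'$ become identified in $Z\oux{W}{X}Y$; then there is $x \in X$ with $(z'_2, y'_2) = (z'_1\cdot k(x), y'_1\cdot h(x))$. Since $Z'$ is a subgroup, $k(x) = (z'_1)^{-1}z'_2 \in Z'$, which forces $g(k(x)) = f(h(x)) \in \alpha(W')$, and hence $x$ lifts to an element of $X' = X\times_W W'$. Thus the two elements are already equal in $Z'\oux{W'}{X'}Y'$, giving exactness at the left term. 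Finally, for the \textquotedblleft in particular\textquotedblright\ clause: if $\alpha(W')$ is open in $W$, then $W_0$ is a discrete group, so $Z_0/X_0$ is discrete; hence $\ker\pi$ is the preimage of the identity and is open-closed, forcing $Z'\oux{W'}{X'}Y'$ to be a union of connected components of $Z\oux{W}{X}Y$.

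The main bookkeeping obstacle is the verification that the quotients actually make sense as topological groups (so that the various normalities and equivariances are used correctly), and that in Step 3 translation by an element of $X$ really lands us inside $Z'\times_{W'}Y'$ rather than only inside its image in $Z\times_W Y$; the equivariance hypotheses on $h$ and $k$ are what makes this unambiguous. Everything else is chasing elements through pullbacks.
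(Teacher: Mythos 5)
Your argument is correct and is exactly the ``elementary group theory'' that the paper declines to write out: its entire proof of this lemma is the single sentence ``The proof is elementary group theory.'' The only imprecision is in your injectivity step, where you treat $Z'=Z\times_W W'$ as a subgroup of $Z$ even though $\alpha$ need not be injective; the fix is the one your closing paragraph already gestures at --- from $g(k(x))=\alpha(w_1')^{-1}\alpha(w_2')\in\alpha(W')$ one takes $v'=(w_1')^{-1}w_2'$ as the $W'$-component of the lift of $x$ to $X'=X\times_W W'$, and the two elements then coincide on the nose in $Z'\times_{W'}Y'$ modulo the image of $X'$.
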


 \begin{proof}
   The proof is elementary group theory.
 \end{proof}

We are now ready to prove that our construction of  $n$-connected covers is functorial, that
is, it respects composition of butterflies.

 \begin{prop}{\label{P:compose}}
  Let   $C \: \bbK \to \bbH$ and 
  $B \: \bbH \to \bbG$ be Lie butterflies, and let $B\circ C  \: \bbK \to \bbG$ be their composition. 
  Then, there is a natural isomorphism of butterflies 
  $B\ang{n}\circ C\ang{n}\Rightarrow (B\circ C)\ang{n}$ which makes the assignment 
  $\bbG \mapsto \bbG\ang{n}$  a bifunctor from the bicategory $\LieXM$ of Lie crossed-modules
  and butterflies to itself.
 \end{prop}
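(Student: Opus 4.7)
\medskip

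\noindent\textbf{Proof plan for Proposition \ref{P:compose}.}
The approach is to compute the centers of the two butterflies $B\ang{n}\circ C\ang{n}$ and $(B\circ C)\ang{n}$ explicitly, produce a natural isomorphism between them via Lemmas \ref{L:diamond} and \ref{L:elementary}, and then verify that it intertwines the four structure maps and satisfies the bifunctor coherence axioms. Set
$F_C := K_0\ang{n}\times_{K_0} E_C \times_{H_0} H_0\ang{n}$,
$F_B := H_0\ang{n}\times_{H_0} E_B \times_{G_0} G_0\ang{n}$, and
$L_H := H_1\times_{H_0} H_0\ang{n}$.
Unwinding the definitions of $\S$\ref{SS:construction}, the center of $B\ang{n}\circ C\ang{n}$ is the twisted fiber product $F_C\ang{n-1}\oux{H_0\ang{n}}{L_H\ang{n-1}} F_B\ang{n-1}$, while the center of $(B\circ C)\ang{n}$ is $F_{BC}\ang{n-1}$, where
$$F_{BC} := K_0\ang{n}\times_{K_0} \big(E_C \oux{H_0}{H_1} E_B\big) \times_{G_0} G_0\ang{n}.$$

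The first step will be to invoke Lemma \ref{L:diamond} with $W = H_0\ang{n}$, $Z = F_C$, $Y = F_B$, $X = L_H$, and $m = n-1$. The hypotheses are straightforward: $W$ is $n$-connected by construction; the map $F_B \to H_0\ang{n}$ is surjective, being the middle term of the NE-SW short exact sequence of $B\ang{n}$; the map $L_H \hookrightarrow F_C$ is closed injective normal with $n$-connected cokernel $K_0\ang{n}$; and the crossed-module and equivariance conditions follow from the fact that $\bbH\ang{n}=[L_H\to H_0\ang{n}]$ is a Lie crossed-module together with the butterfly axioms for $C\ang{n}$ and $B\ang{n}$. The lemma delivers a natural isomorphism
$$F_C\ang{n-1}\oux{H_0\ang{n}}{L_H\ang{n-1}} F_B\ang{n-1} \;\cong\; \big(F_C \oux{H_0\ang{n}}{L_H} F_B\big)\ang{n-1}.$$

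The second step is to identify $F_C \oux{H_0\ang{n}}{L_H} F_B$ with $F_{BC}$ up to components that disappear under $(-)\ang{n-1}$. Direct inspection shows that $F_C \times_{H_0\ang{n}} F_B$ consists of tuples $(k',e_C,h',e_B,g')$ subject to the evident pullback conditions. The $L_H$-action has two separate effects upon quotienting: the subgroup $\ker q_n \subseteq H_0\ang{n}$ (inside $L_H$ via the second factor with $\eta=1$) removes the intermediate lift $h'$, while the diagonal action of the subgroup $\partial^{-1}(H_0^o)\subseteq H_1$ identifies $E_C\times_{H_0} E_B$ with $E_C \oux{H_0}{H_1} E_B$ up to a discrete quotient of $\pi_0 H_0$ type. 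Lemma \ref{L:elementary}, applied to this configuration, then realizes $F_C \oux{H_0\ang{n}}{L_H} F_B$ as a union of connected components of $F_{BC}$ containing the identity component. Since for $n\geq 1$ the cover $(-)\ang{n-1}$ depends only on the identity component, and for $n=0$ the cover is the identity functor on the nose, we obtain a canonical isomorphism $\big(F_C \oux{H_0\ang{n}}{L_H} F_B\big)\ang{n-1}\cong F_{BC}\ang{n-1}$, which combined with step 1 produces the desired central isomorphism.

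The final step, and the part I expect to be the main obstacle, is the bookkeeping needed to check that the central isomorphism above intertwines all four structure maps of the two butterflies, and that the resulting assignment satisfies the coherence axioms of a bifunctor on $\LieXM$: naturality with respect to butterfly morphisms (2-morphisms), compatibility with the associator of composition, and compatibility with unitors. Naturality with respect to the structure maps will reduce to the functoriality of $q_{n-1}$ and of pullbacks and quotients. The coherence axioms, once the structure maps are known to match, will reduce after applying $(-)\ang{n-1}$ to the analogous coherence of the original butterfly composition in $\LieXM$, combined with the naturality of the Lemma \ref{L:diamond} isomorphism, so no new ideas should be required beyond careful diagram chasing.
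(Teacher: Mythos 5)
Your proposal follows essentially the same route as the paper's proof: the same identification of the two centers, the same application of Lemma \ref{L:diamond} with $X=L_H$, $Z=F_C$, $Y=F_B$, $W=H_0\ang{n}$, $m=n-1$, and the same use of Lemma \ref{L:elementary} to see that $F_C\oux{H_0\ang{n}}{L_H}F_B$ sits inside $F_{B\circ C}$ as a union of connected components, after which $(-)\ang{n-1}$ erases the difference; the coherence checks are deferred exactly as in the paper. The argument is correct as an outline of that proof.
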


 \begin{proof}
  Let $C$  and $B$ be given by
    $$\xymatrix@C=8pt@R=6pt@M=6pt{ K_1 \ar[rd]^{\kappa}  \ar[dd] 
                            & & H_1 \ar[ld]_{\iota}  \ar[dd]  \\
                  & E_C \ar[ld]^{\sigma}   \ar[rd]_{\rho}   & \\
                  K_0 & & H_0       } \ \ \ \ \
        \xymatrix@C=8pt@R=6pt@M=6pt{ H_1 \ar[rd]^{\kappa'}  \ar[dd] 
                            & & G_1 \ar[ld]_{\iota'}  \ar[dd]  \\
                  & E_B \ar[ld]^{\sigma'}  \ar[rd]_{\rho'}  & \\
                  H_0 & & G_0       }$$
   respectively. Then, the composition $B\circ C$ is  the butterfly
      $$\xymatrix@C=10pt@R=4pt@M=4pt{ K_1 \ar[rd]  \ar[dd]
                            & & G_1 \ar[ld]  \ar[dd] \\
                  & E_C\oux{H_0}{H_1}E_B \ar[ld]  \ar[rd]   & \\
                  K_0 & & G_0       }$$

  Recall the notation of $\S$\ref{SS:construction}:   
     $$F_C=K_0\ang{n}\times_{K_0}E_C\times_{H_0}H_0\ang{n} \ \ \text{and} \ \
         F_B=H_0\ang{n}\times_{H_0}E_B\times_{G_0}G_0\ang{n}.$$
  The group appearing in the center of the butterfly $B\ang{n}\circ C\ang{n}$ is
     $$F_C\ang{n-1}\oux{H_0\ang{n}}{L_H\ang{n-1}}F_B\ang{n-1}.$$
  The group appearing in the center of the butterfly   $(B\circ C)\ang{n}$ is
    $$F_{B\circ C}\ang{n-1}=
      \big(K_0\ang{n}\times_{K_0}(E_C\oux{H_0}{H_1}E_B)\times_{G_0}G_0\ang{n}\big)\ang{n-1}.$$
  We show that there is a natural isomorphism from the former to the latter.
  For this, first we apply Lemma \ref{L:diamond} with $m=n-1$ and
      $$X=L_H, \ \ Z=F_C, \ \ Y=F_B, \ \ \text{and} \ \  W=H_0\ang{n}$$
  to get
     $$F_C\ang{n-1}\oux{H_0\ang{n}}{L_H\ang{n-1}}F_B\ang{n-1}
       \cong  (F_C\oux{H_0\ang{n}}{L_H}F_B)\ang{n-1}.$$   
  It is now enough to construct a natural isomorphism
     $$F_C\oux{H_0\ang{n}}{L_H}F_B\to  F_{B\circ C},$$
  that is,
    $$(K_0\ang{n}\times_{K_0}E_C\times_{H_0}H_0\ang{n} )
        \oux{H_0\ang{n}}{L_H}(H_0\ang{n}\times_{H_0}E_B\times_{G_0}G_0\ang{n})$$ 
    $$\longrightarrow \big(K_0\ang{n}\times_{K_0}(E_C\oux{H_0}
       {H_1}E_B)\times_{G_0}G_0\ang{n}\big).$$
  This, however, may not be the case. 
  More precisely, there is such a natural homomorphism, but it is not necessarily
  an isomorphism. It is, however, an isomorphism between the connected components of the
  identity elements (and that is enough for our purposes). To see this, use  
  Lemma \ref{L:elementary} with
      $$X=H_1, \ \, Z=K_0\ang{n}\times_{K_0} E_C,\ \ Y=E_B\times_{G_0}G_0\ang{n},$$
      $$W=H_0, \ \ W'=H_0\ang{n}, \ \ 
           \text{and} \ \  \alpha=q_n.$$
  (Recall that $L_H=H_1\times_{H_0}H_0\ang{n}$.) Here we are using the fact that
  $\alpha=q_n \: H_0\ang{n} \to H_0$ surjects onto 
  the connected component of the identity element in $H_0$.
 
  We omit the verification that the isomorphism 
  $B\ang{n}\circ C\ang{n}\Rightarrow (B\circ C)\ang{n}$ respects 
  isomorphisms of butterflies and that it commutes with the associator isomorphisms in
  $\LieXM$.
 \end{proof}

The proposition is valid in the topological setting as well, and the proof is identical.

 \begin{cor}{\label{C:cover}} Let $f\: \bbH \to \bbG$ be an equivalence of Lie crossed-modules.
  Then the induced morphism $f\ang{n} \: \bbH\ang{n} \to \bbG\ang{n}$, $n=0,1,2$, is also an
  equivalence of Lie crossed-modules. 
 \end{cor}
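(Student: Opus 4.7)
The plan is to deduce this immediately from the bifunctoriality of $(-)\ang{n}$ established in Proposition \ref{P:compose}. Since $f \: \bbH \to \bbG$ is an equivalence in the bicategory $\LieXM$, it admits a quasi-inverse $g \: \bbG \to \bbH$, that is, a butterfly together with $2$-isomorphisms $g \circ f \cong \id_{\bbH}$ and $f \circ g \cong \id_{\bbG}$. Concretely, $g$ may be realized by flipping the butterfly $f$ along its vertical axis, the Lie-group analogue of Corollary \ref{C:flip}: the NW-SE sequence of $f$ is short exact precisely because $f$ induces isomorphisms on $\pi_0$ and $\pi_1$.

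Now apply the bifunctor $(-)\ang{n} \: \LieXM \to \LieXM$. Its compositor isomorphism $(B\circ C)\ang{n} \cong B\ang{n} \circ C\ang{n}$ together with its unit isomorphism $(\id_\bbG)\ang{n} \cong \id_{\bbG\ang{n}}$ (both part of the bifunctor structure packaged in Proposition \ref{P:compose}) transport the $2$-isomorphisms above to $2$-isomorphisms
$$g\ang{n} \circ f\ang{n} \cong (g\circ f)\ang{n} \cong \id_{\bbH\ang{n}},
\qquad f\ang{n} \circ g\ang{n} \cong \id_{\bbG\ang{n}}.$$
Hence $f\ang{n}$ is invertible in $\LieXM$, with quasi-inverse $g\ang{n}$, and in particular it is an equivalence of Lie crossed-modules.

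As a sanity check one may verify the conclusion at the level of homotopy groups: by Proposition \ref{P:cover}, $\pi_i(\bbG\ang{n}) = \pi_i(\bbH\ang{n}) = 0$ for $i \leq n$, while for $i \geq n+1$ the maps $q_n$ on both sides are isomorphisms on $\pi_i$, so $\pi_i(f\ang{n})$ is identified with $\pi_i(f)$ under these isomorphisms and is therefore an isomorphism. There is no real obstacle here; the substantive work has already been carried out in Proposition \ref{P:compose}, and the corollary is a purely formal consequence of the fact that any bifunctor of bicategories sends equivalences to equivalences.
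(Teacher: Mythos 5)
Your proof is correct and is exactly the intended argument: the paper states this as an immediate corollary of Proposition \ref{P:compose}, relying on the purely formal fact that a bifunctor of bicategories (with its compositor and unitor) sends equivalences, i.e.\ morphisms invertible up to $2$-isomorphism, to equivalences. The homotopy-group sanity check via Proposition \ref{P:cover} and the naturality of $q_n$ is a valid independent confirmation but is not needed.
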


\subsection{Adjunction property of connected covers}{\label{SS:adjunction}} We show that 
$n$-connected covers of Lie crossed-modules satisfy the expected adjunction property, namely, 
that a weak morphism $f \: \bbH \to \bbG$ from an $n$-connected Lie crossed-module $\bbH$
uniquely factors through $q_n \: \bbG\ang{n} \to \bbG$ (Proposition \ref{P:factor}). 

As in the previous section, we will assume that $n\leq 2$. What we say remains 
valid for  topological crossed-modules (and also for infinite
dimensional Lie crossed-modules). We will use  
the adjunction property for groups: 
 \begin{itemize}
  \item[($\bigstar4$)]  For any homomorphism $f \: H \to G$  with $H$
  $(n-1)$-connected, $f$ factors uniquely through $q_{n-1} \: H\ang{n-1} \to H$.
 \end{itemize}

 \begin{prop}{\label{P:factor}}
   Let $\bbG$ and $\bbH$ be Lie crossed-modules, and suppose
   that $\bbH$ is $n$-connected (Definition \ref{D:connected}).
   Then, the morphism 
   $q=q_n \:  \bbG\ang{n} \to \bbG$  induces an equivalence of hom-groupoids
                 $$q_*\: \LieXM(\bbH,\bbG\ang{n})\risom \LieXM(\bbH,\bbG).$$  
 \end{prop}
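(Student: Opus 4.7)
Since $\bbH$ is $n$-connected, Proposition \ref{P:cover} shows that $q_n^\bbH\colon \bbH\ang{n}\to\bbH$ induces isomorphisms on every $\pi_i$, hence is an equivalence of Lie crossed-modules. By Corollary \ref{C:flip} (in its Lie version, cf.\ the end of $\S\ref{S:Weak}$), pick a quasi-inverse butterfly $s\colon\bbH\to\bbH\ang{n}$ together with a chosen 2-isomorphism $\varepsilon\colon q_n^\bbH\circ s\Rightarrow\id_\bbH$. Define
$$p\colon \LieXM(\bbH,\bbG)\longrightarrow\LieXM(\bbH,\bbG\ang{n}), \qquad B\longmapsto B\ang{n}\circ s,$$
using the bifunctor $(-)\ang{n}$ of Proposition \ref{P:compose}. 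The goal is to show $p$ is a quasi-inverse to $q_*$.

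The key input is the naturality of $q_n$: for every butterfly $B\colon\bbH\to\bbG$, inspection of the constructions in $\S\ref{SS:construction}$ and $\S\ref{SS:strictcompose}$ produces a canonical 2-isomorphism
$$\eta_B\colon q_n^\bbG\circ B\ang{n}\Longrightarrow B\circ q_n^\bbH,$$
natural in $B$. Concretely, the left-hand side is computed as a push-out (strict composition with the strict morphism $q_n^\bbG$) and the right-hand side as a pull-back (strict composition with the strict morphism $q_n^\bbH$); both central extensions are canonically identified via the group $F=H_0\ang{n}\times_{H_0}E\times_{G_0}G_0\ang{n}$ and its $(n-1)$-connected cover, the identification following from Corollary \ref{C:cartesian} together with axiom $(\bigstar 2)$.

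Given $\eta$ and $\varepsilon$, the counit $q_*\circ p\Rightarrow\id$ is the pasting
$$q_n^\bbG\circ B\ang{n}\circ s \; \overset{\eta_B\ast s}{\Longrightarrow}\;  B\circ q_n^\bbH\circ s \;\overset{B\ast\varepsilon}{\Longrightarrow}\; B.$$
For the unit $\id\Rightarrow p\circ q_*$ applied to $B'\colon\bbH\to\bbG\ang{n}$, combine Proposition \ref{P:compose}, which gives $(q_n^\bbG\circ B')\ang{n}\cong (q_n^\bbG)\ang{n}\circ (B')\ang{n}$, with the identification $(q_n^\bbG)\ang{n}\cong q_n^{\bbG\ang{n}}$ (a consequence of the remark on strict morphisms at the end of $\S\ref{SS:construction}$, valid because $\bbG\ang{n}$ is itself $n$-connected by Proposition \ref{P:cover}), and then apply $\eta_{B'}$, to conclude that $p(q_*(B'))\cong B'\circ q_n^\bbH\circ s\cong B'$. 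The main obstacle is the construction of $\eta_B$ and the verification of its naturality: this is an explicit diagram chase using the formulas of $\S\ref{SS:construction}$--$\S\ref{SS:composition}$ and the axioms $(\bigstar 0)$--$(\bigstar 4)$, but no conceptually new input is needed.
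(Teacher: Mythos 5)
Your strategy---routing everything through the bifunctor $(-)\ang{n}$ of Proposition \ref{P:compose} and a pseudo-naturality $2$-cell $\eta_B\colon q_n^{\bbG}\circ B\ang{n}\Rightarrow B\circ q_n^{\bbH}$---is genuinely different from the paper's argument, but as written it has a gap at the very first step. Proposition \ref{P:cover} says that $q_n^{\bbH}$ induces isomorphisms on the homotopy groups $\pi_i$ of the associated classifying space (see the caveat on notation at the start of $\S$\ref{S:Connected}); this is strictly weaker than being an equivalence of Lie crossed-modules, which requires isomorphisms on $\ker\partial$ and $\coker\partial$ as Lie groups, equivalently that the NW--SE sequence of the associated butterfly be short exact---and that is exactly what you need in order to ``flip'' and obtain $s$. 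The two notions genuinely differ: the strict morphism $[\mathbb{R}\to 1]\to[1\to 1]$ induces isomorphisms on all classifying-space homotopy groups but is not an equivalence of crossed-modules. So ``isomorphisms on every $\pi_i$, hence an equivalence'' is not a valid inference. The statement you want is in fact true, but verifying it amounts to checking exactness of $L_H\ang{n-1}\to H_1\rtimes H_0\ang{n}\to H_0$ by hand (a covering-space computation using $\pi_0\partial$ and $\pi_1\partial$), or else to first replacing $\bbH$ by an equivalent presentation with $H_0$ $n$-connected and $H_1$ $(n-1)$-connected as in $\S$\ref{SS:definition}, for which $\bbH\ang{n}=\bbH$ and $q_n^{\bbH}=\id$ on the nose, and then transporting back via Corollary \ref{C:cover} and the very $2$-cells $\eta$ you are constructing. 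Once you make that replacement, you are one step away from the paper's own proof.

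Be aware also that $\eta_B$ is where essentially all of the content of this proposition lives: it is asserted in Theorem \ref{T:covers} but not proved in the body of the paper, so it cannot simply be cited. It does exist---$q_n^{\bbG}\circ B\ang{n}$ has center the pushout $F\ang{n-1}\oplus_{L_G\ang{n-1}}G_1$, while $B\circ q_n^{\bbH}$ has center the pullback $H_0\ang{n}\times_{H_0}E$; both are extensions of $H_0\ang{n}$ by $G_1$, and the evident comparison map is an isomorphism by the short five lemma---but writing this out, together with compatibility with the wings and naturality in $B$, is comparable in length to the paper's entire argument. The paper instead proceeds directly: after normalizing $\bbH$ as above, it sets $F=E\times_{G_0}G_0\ang{n}$, shows $1\to L_G\to F\to H_0\to 1$ is exact, passes to $F\ang{n-1}$ using ($\bigstar$2), and lifts $\kappa$ through $q_{n-1}\colon F\ang{n-1}\to F$ using the adjunction ($\bigstar$4), producing the quasi-inverse $B\mapsto B'$ in one stroke, without inverting $q_n^{\bbH}$ and without the pseudo-naturality of $q_n$. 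Your approach buys a cleaner formal skeleton at the price of more unproved machinery; with the first step repaired and a proof of $\eta$ supplied, it does go through.
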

 
 \begin{proof}
  We construct an inverse  functor (quasi-inverse, to be precise)  to $q_*$. The construction 
  is very similar to the construction of the $n$-connected cover of a butterfly given in the previous
  subsection.
 
  Since $\bbH=[H_1 \to H_0]$ is $n$-connected, we may assume that $H_0$ is $n$-connected
  and $H_1$ is $(n-1)$-connected (this was discussed in $\S$ \ref{SS:definition}).  
  Consider  a butterfly $B$
         $$\xymatrix@C=8pt@R=6pt@M=6pt{ H_1 \ar[rd]^{\kappa} \ar[dd]
                          & & G_1 \ar[ld]_{\iota} \ar[dd] \\
                            & E \ar[ld]^{\sigma} \ar[rd]_{\rho}  & \\
                                       H_0 & & G_0       }$$
  in $\LieXM(\bbH,\bbG)$. Define $F:=E\times_{G_0}G_0\ang{n}$. Let 
  $\tau \: F \to H_0$ be $\sigma\circ\pr_1$. Since $\tau$ is a (locally trivial)
  fibration and $H_0$ is connected, $\tau$ is surjective. 
  On the other hand,  $\ker\tau$ is the inverse image of $\iota(G_1)$
  under the projection $\pr_1 \: F \to E$; this is exactly 
  $G_1 \times_{G_0} G_0\ang{n}=L_G$. That is, we have a short
  exact sequence
     $$1 \to  L_G \llra{\beta} F \llra{\tau} H_0 \to 1.$$ 
  It follows from  ($\bigstar$2) applied to $\beta$ that the sequence
     $$1 \to  L_G\ang{n-1} \llra{\beta\ang{n-1}} F\ang{n-1} 
  \llra{\tau\circ q_{n-1}} H_0 \to 1$$ 
  is also short exact. 
 
 Define  the butterfly $B'$ to be
             $$\xymatrix@C=10pt@R=8pt@M=6pt{ H_1 \ar[rd]^{\kappa'} \ar[dd]
                          & & L_G\ang{n-1} \ar[ld]_{\beta\ang{n-1}} \ar[dd] \\
                            & F\ang{n-1} \ar[ld]^{\tau\circ q_{n-1}}
                                  \ar[rd]_{\varrho}  & \\
                                       H_0 & & G_0\ang{n}       }$$
  where $\varrho=\pr_2\circ q_{n-1}$ and $\kappa'$ is obtained by the adjunction  
  property ($\bigstar$4) applied to $q_{n-1} \: F\ang{n-1} \to F$.

  It is easy to verify that $B \mapsto B'$ is an inverse to $q_*$. (For this, use the fact
  that $E$ is the pushout of $F\ang{n-1}$ along 
  $\pr_1\circ q_{n-1}\:L_G\ang{n-1}\to G_1$ and apply \cite{Maps}, $\S$10.2.)       
 \end{proof}
 
 \begin{cor}{\label{C:connectedadjunction}}  
  For $n=0,1,2$, the inclusion of the full sub bicategory of $\LieXM$ consisting of $n$-connected
  Lie crossed-modules is left adjoint to the $n$-connected cover bifunctor
  $(-)\ang{n} \: \LieXM \to \LieXM$.
 \end{cor}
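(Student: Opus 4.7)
The plan is to derive this corollary formally from Proposition \ref{P:factor}, which already supplies the essential content: for an $n$-connected Lie crossed-module $\bbH$ and any Lie crossed-module $\bbG$, postcomposition with $q_n \: \bbG\ang{n} \to \bbG$ gives an equivalence of groupoids
$$q_* \: \LieXM(\bbH, \bbG\ang{n}) \risom \LieXM(\bbH, \bbG).$$
A bicategorical left adjunction is exactly the data of such a family of hom-groupoid equivalences that is pseudo-natural in both arguments, together with coherent unit and counit. Naturality in $\bbH$ is immediate, since $q_*$ is defined by postcomposition and therefore commutes strictly with precomposition by any butterfly $\bbH' \to \bbH$.

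For naturality in $\bbG$, given a butterfly $B \: \bbG \to \bbG'$ together with the bifunctorial butterfly $B\ang{n} \: \bbG\ang{n} \to \bbG'\ang{n}$ of Proposition \ref{P:compose}, I would produce a coherent $2$-isomorphism of butterflies $B \circ q_n^{\bbG} \cong q_n^{\bbG'} \circ B\ang{n}$, thereby exhibiting $q = q_n$ as a pseudo-natural transformation $(-)\ang{n} \Rightarrow \id$. The required isomorphism can be read off from the explicit construction of $B\ang{n}$ in $\S$\ref{SS:construction}: the central group $F\ang{n-1}$ of $B\ang{n}$ is equipped with $q_{n-1} \: F\ang{n-1} \to F = H_0\ang{n}\times_{H_0}E\times_{G_0}G_0\ang{n}$, and composing with the middle projection $F \to E$ yields the desired comparison map between centers; compatibility with the four structure maps $\kappa_n, \iota_n, \sigma_n, \rho_n$ is built in by construction.

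Once pseudo-naturality in $\bbG$ is checked, the adjunction data are forced. The counit is the pseudo-natural transformation $q_n \: (-)\ang{n} \Rightarrow \id$. When $\bbH$ is already $n$-connected, $q_n \: \bbH\ang{n} \to \bbH$ is an equivalence of Lie crossed-modules: by Proposition \ref{P:cover} both source and target have vanishing $\pi_i$ for $i \leq n$, while $q_n$ induces isomorphisms on $\pi_i$ for $i \geq n+1$. Corollary \ref{C:flip} supplies an explicit inverse butterfly, and the assignment $\bbH \mapsto (q_n^{\bbH})^{-1}$ is the unit. The triangle identities then follow from the fact that the equivalence in Proposition \ref{P:factor} is realized by postcomposition with $q_n$, so that the unit and counit are mutually inverse up to the very isomorphisms that witness the adjoint equivalence.

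The main technical obstacle is the pseudo-naturality verification in $\bbG$, together with its compatibility with composition of butterflies and with the associators of $\LieXM$. This is a diagram-chase analogous to the argument in the proof of Proposition \ref{P:compose}, but considerably simpler, since only the transformation $q_n$ itself intervenes rather than a two-sided composition. I would organize this check by reducing along Lemma \ref{L:diamond} and Corollary \ref{C:cartesian}, which already absorb the necessary comparisons between fiber products and connected covers.
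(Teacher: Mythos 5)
Your proposal is correct and matches the paper's (implicit) argument: the corollary is stated as an immediate consequence of Proposition \ref{P:factor}, which supplies the hom-groupoid equivalences, with the pseudo-naturality in $\bbG$ resting on the construction of $B\ang{n}$ and Proposition \ref{P:compose}, exactly as you outline. The only slip is cosmetic: Corollary \ref{C:flip} concerns butterflies between \lie2s, so for the invertibility of $q_n\:\bbH\ang{n}\to\bbH$ when $\bbH$ is $n$-connected you should instead invoke the corresponding criterion for Lie butterflies from \cite{ButterflyI} together with Proposition \ref{P:cover}.
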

 
\section{The bifunctor from Lie crossed-modules to \lie2s}{\label{S:Bifun}}

In this section we prove our main integration results for weak morphisms of \lie2s (Theorem
\ref{T:bifunctor}} and Corollary \ref{C:adjoint}).
Throughout the section, we fix the base ring to be  $\mathbb{R}$ or $\mathbb{C}$. All Lie groups
are finite dimensional (real or complex, respectively). 
  
We begin with a simple lemma.
 
 \begin{lem}{\label{L:equivariant}}
    Let $H$, $K$ and $K'$ be connected Lie groups.  Suppose that $H$ acts on $K$ and $K'$ by
    automorphisms and let $f \: K \to K'$ be a Lie homomorphism. If the induced map
    $\Lie f \: \Lie K \to \Lie K'$ is $H$-equivariant then so is $f$ itself.  
 \end{lem}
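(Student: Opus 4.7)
The plan is to reduce $H$-equivariance of $f$ to the well-known principle that a Lie homomorphism out of a connected Lie group is determined by its derivative.

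For each $h \in H$ the action gives automorphisms $\phi_h \: K \to K$ and $\psi_h \: K' \to K'$. Showing $f$ is $H$-equivariant amounts to showing $f \circ \phi_h = \psi_h \circ f$ as Lie homomorphisms $K \to K'$, for every $h \in H$. Both sides are genuine Lie homomorphisms (compositions of Lie homomorphisms), so I would compute their derivatives:
\[
\Lie(f \circ \phi_h) = \Lie f \circ \Lie \phi_h, \qquad \Lie(\psi_h \circ f) = \Lie \psi_h \circ \Lie f.
\]
By the hypothesis that $\Lie f$ is $H$-equivariant, these two composites agree for every $h \in H$.

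The final step invokes the standard uniqueness principle: if two Lie homomorphisms out of a connected Lie group have the same derivative, they coincide. Since $K$ is connected, this forces $f \circ \phi_h = \psi_h \circ f$ for all $h \in H$, which is precisely the statement that $f$ is $H$-equivariant.

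There is essentially no obstacle here: the connectedness of $H$ and $K'$ is not even used, only that of $K$, which is exactly what makes the uniqueness-from-derivative principle applicable. The whole argument is a direct application of the classical correspondence between Lie homomorphisms from a connected Lie group and their differentials.
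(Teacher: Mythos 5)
Your argument is correct and is exactly the paper's proof, which simply invokes the same principle ("if two group homomorphisms induce the same map on Lie algebras then they are equal"); you have just spelled out the reduction to the pair of homomorphisms $f\circ\phi_h$ and $\psi_h\circ f$ explicitly. Your observation that only the connectedness of $K$ is needed is accurate under the natural reading of the hypothesis (equivariance for each $h\in H$ acting via $\Lie\phi_h$).
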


 \begin{proof}
    This follows from the fact that if two group homomorphisms induce the same 
    map on Lie algebras then they are equal. 
 \end{proof}
 
Let $\LieAlgXM$  be the full sub bicategory of  $\TwoTermB$ consisting of strict \lie2s (i.e., Lie
algebra crossed-modules).
 
 \begin{thm}{\label{T:bifunctor}} 
   Taking Lie algebras induces a bifunctor 
         $$\Lie \: \LieXM \to \TwoTermB.$$ 
   The bifunctor $\Lie$ factors through and essentially surjects onto 
   $\LieAlgXM$. Furthermore, for $\bbH, \bbG \in \LieXM$, the induced functor
         $$\Lie \: \LieXM(\bbH,\bbG)\to \TwoTermB(\Lie\bbH,\Lie\bbG)$$
     on   hom-groupoids is  
   \begin{itemize}
      \item[$i$)] faithful, if $\bbH$ is connected;
      \item[$ii$)] fully faithful, if $\bbH$ is 1-connected;
      \item[$iii$)] an equivalence, if $\bbH$ is 2-connected.
   \end{itemize}
 \end{thm}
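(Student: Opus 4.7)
The plan is to view the middle group $E$ of a Lie butterfly $B: \bbH \to \bbG$ as a Lie-group extension $1 \to G_1 \to E \to H_0 \to 1$ (the NE-SW axiom) together with the maps $\rho: E \to G_0$ and $\kappa: H_1 \to E$, and then reduce everything to the classical integration of Lie-algebra morphisms via 1-connected groups. I would first build $\Lie$ by differentiating every group and structure map in a butterfly diagram. The butterfly axioms transport directly, and the image is automatically in $\LieAlgXM$ since $\bbG$ and $\bbH$ are strict. Bifunctoriality is immediate because the center of a composite butterfly, $F \oux{H_0}{H_1} E$, commutes with the Lie functor. Essential surjectivity of $\Lie$ onto $\LieAlgXM$ is the classical integration of a Lie-algebra crossed-module: integrate $V_0$ and $V_1$ to 1-connected Lie groups, lift the derivation action $V_0 \to \operatorname{Der}(V_1)$ to a Lie-group action by simple-connectedness, integrate $\partial$, and verify the crossed-module axioms with help from Lemma \ref{L:equivariant}.

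For faithfulness (i), a 2-morphism $\phi: E \to E'$ is forced to equal $\iota'$ on $\iota(G_1)$, and its restriction to the identity component $E^o$ is determined by $\Lie \phi$ by uniqueness of integration of Lie-algebra maps on connected groups. When $H_0$ is connected, $\sigma(E^o) = H_0$ (open subgroup of a connected group), so $E = \iota(G_1) \cdot E^o$ and $\phi$ is completely determined. For fullness (ii), given $\psi: \Lie E \to \Lie E'$ I would integrate on the universal cover to $\widetilde{E^o} \to (E')^o$; because $H_0$ is 1-connected, the fibration $G_1 \to E^o \to H_0$ gives $\pi_1 E^o$ as a quotient of $\pi_1 G_1$, whose image in $E'$ is trivial (the restriction $\widetilde{G_1} \to E'$ of the integration equals $\iota' \circ (\widetilde{G_1} \to G_1)$ by uniqueness, since $\psi|_{\Lie G_1} = \Lie \iota'$). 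Hence the integration descends to $\phi^o: E^o \to E'$; gluing with the forced $\iota': G_1 \to E'$ (compatibility on $G_1 \cap E^o$ being automatic) yields $\phi: E \to E'$, whose compatibility with the full butterfly structure follows from Lemma \ref{L:equivariant}.

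For essential surjectivity (iii), assume $\bbH$ is 2-connected, so we may take $H_0$ and $H_1$ both 1-connected. Given a Lie-algebra butterfly $C$ with middle $\mathfrak{e}$, integrate $\mathfrak{e}$ to the 1-connected Lie group $\widetilde E$; the inclusion $\Lie G_1 \hookrightarrow \mathfrak{e}$ integrates to $\widetilde{G_1} \to \widetilde E$, with quotient $\widetilde E / \widetilde{G_1} \cong H_0$ because $H_0$ is 1-connected. Define $E := \widetilde E / \pi_1(G_1)$ where $\pi_1(G_1) \subset \widetilde{G_1}$; this yields the NE-SW short exact sequence $1 \to G_1 \to E \to H_0 \to 1$. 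Construct $\rho: E \to G_0$ by integrating $\Lie \rho$ to $\widetilde E \to \widetilde{G_0}$, post-composing with $\widetilde{G_0} \to G_0$, and descending (which works because $\pi_1(G_1) \to G_1 \to G_0$ hits $\partial(1) = 1$). Construct $\kappa: H_1 \to E$ by integrating $\Lie \kappa$ to $H_1 \to \widetilde E$, using 1-connectedness of $H_1$, and projecting. Transport the remaining butterfly axioms from $\mathfrak{e}$ to $E$ via Lemma \ref{L:equivariant}.

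The main obstacle is step (iii), where the 2-connectedness of $\bbH$ enters essentially in two distinct places: 1-connectedness of $H_0$ is what makes $\widetilde E / \widetilde{G_1} = H_0$ and forces $E$ into the correct extension, while 1-connectedness of $H_1$ is what permits $\kappa$ to be integrated from $H_1$ itself rather than from a nontrivial cover. Coordinating these integration and descent choices, and verifying that the resulting data satisfy the full butterfly axioms (especially the complex condition on the NW-SE diagonal and the equivariance of the action of $E$ on $G_1$ and $H_1$), is the technical heart of the argument.
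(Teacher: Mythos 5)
Your overall strategy coincides with the paper's: part (i) via generation of $E$ by $\iota(G_1)$ and $E^o$, part (ii) by integrating on a cover and descending using $\pi_1 H_0=0$ to control $\pi_1 E$ through $\pi_1 G_1$, and part (iii) by integrating the middle Lie algebra $\mathfrak{e}$ to a $1$-connected group, quotienting by $\pi_1(G_1)$, and using $1$-connectedness of $H_1$ to integrate $\kappa$. Parts (i) and (ii) match the paper's proof essentially step for step.

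There is, however, one concrete point in (iii) that your sketch passes over and that is in fact the crux of the paper's argument. You write ``Define $E := \widetilde{E}/\pi_1(G_1)$'' as if this were automatic, but $\pi_1(G_1)$ is a priori only a discrete \emph{central} subgroup of $\widetilde{G_1}\subset\widetilde{E}$; for the quotient to be a group at all you must first prove that $\pi_1(G_1)$ is \emph{normal in} $\widetilde{E}$, and this is not formal. The paper establishes it by comparing the conjugation action of $\widetilde{E}$ on its subgroup $G\cong\widetilde{G_1}$ with the action of $\widetilde{E}$ on $G_1$ obtained by integrating $\rho\:\mathfrak{e}\to\Lie G_0$ and using the crossed-module action of $G_0$ on $G_1$: the equivariance axiom of the Lie algebra butterfly says these two actions agree on Lie algebras, so Lemma \ref{L:equivariant} (all groups involved being connected) forces the covering map $G\to G_1$ to be $\widetilde{E}$-equivariant, whence its kernel $\pi_1(G_1)$ is conjugation-invariant in $\widetilde{E}$. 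You do invoke Lemma \ref{L:equivariant}, but only afterwards, to ``transport the remaining butterfly axioms''; logically it must be deployed \emph{before} the quotient is formed, and the equivariance hypothesis it needs is supplied precisely by the fourth butterfly axiom for $\rho$. A similar (easier) descent argument is then still needed to show that $\mathrm{Int}\,\rho$ kills $\pi_1(G_1)$ so that $\rho$ descends to $E$ — your remark that $\pi_1(G_1)\to G_1\to G_0$ ``hits $1$'' is the right idea but should be routed through the commutative diagram of integrated maps as in part (ii). With these two points supplied, your proposal completes to the paper's proof.
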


 \begin{proof} That $\Lie\: \LieXM \to \TwoTermB$ 
  is a bifunctor follows from the fact that taking Lie algebras is exact and
  commutes with fiber products of Lie groups.

 \medskip  
  
 \noindent{\em Proof of} ($i$). Let $\bbG=[G_1 \to G_0]$ and $\bbH=[H_1\to H_0]$.
  Let $B,B' \: \bbH \to \bbG$ be two butterflies. Since $\bbH$ is connected,
  we may assume that $H_0$ is connected (see $\S$\ref{SS:definition}). Denote the
  NE-SW short exact sequences for $B$ and $B'$ by
    $$0\to G_1 \to E \to H_0 \to 0,$$
    $$0\to G_1 \to E' \to H_0 \to 0.$$
  Consider two isomorphisms $B \Rightarrow B'$, given by $\Phi,\Psi \: E \to  E'$,  such  that 
     $$\Lie \Phi=\Lie \Psi \: \Lie E \to \Lie E'.$$ 
  Then, $\Phi$ and $\Psi$ 
  are equal on the connected component $E^o$ and also on $G_1$. Since $H_0$ 
  is connected, $E^o$ and $G_1$ generate $E$, so $\Phi$ and $\Psi$ 
  are equal on the whole $E$.
 
 \medskip  
  
 \noindent{\em Proof of} ($ii$).  
  Notation being as in the previous part, we may assume that $H_0$ is connected and
  simply-connected and
  $H_1$ is connected  (see $\S$\ref{SS:definition}). 
  Consider  an isomorphism $\Lie B \Rightarrow \Lie B'$ given  by $f\: \Lie E \to \Lie E'$.
  We show that $f$ integrates to $\Phi \: E \to E'$.
  
  Let $\tilde{E} \to E$ be the universal cover of $E$. 
  Integrate $f$ to a homomorphism  $\tilde{\Phi} \: \tilde{E} \to E'$. Consider the diagram
     $$\xymatrix@C=36pt@R=18pt@M=4pt{ 
         &&\tilde{E} \ar[d]^{\beta} \ar@/_1.5pc/ [ddd]_(0.3){\tilde{\Phi}} 
              \ar[rdd]^{\gamma}  && \\
         & G \ar[d]_{\alpha} \ar[ru]^{\delta} & E \ar@{..>} [dd]^{\Phi} \ar[dr]^(0.3){\sigma} &  & \\
              0 \ar[r] & G_1 \ar[ur]  |!{[uur];[dr]}\hole \ar[rd] & &  H_0 \ar[r]& 0 \\
         & & E' \ar[ur] && 
        }$$
  Here $G$ is the kernel of $\gamma:=\sigma\beta \: \tilde{E} \to  H_0$. Note that
  $G\cong G_1\times_{E}\tilde{E}$. That is, $G$ is the pullback of $\tilde{E}$ along
  the map $G_1 \to E$. Since  
  $\pi_iH_0=0$ for $i=1,2$, a fiber homotopy exact sequence argument shows  
  that $\pi_1G_1 \to \pi_1E$ is an isomorphism. Hence, $G$ is the universal cover
  of $G_1$ and, in particular, is connected.
   
  If we apply $\Lie$ to the above diagram, we obtain a commutative diagram of Lie algebras. 
  Therefore, since all the groups involved are connected, 
  the original diagram of Lie groups is also commutative. Since the top left
  square is cartesian, $\delta$ induces an isomorphism $\delta \: \ker\alpha \to \ker\beta$.
  Commutativity of the diagram implies then that $\tilde{\Phi}$ vanishes on $\ker\beta$. Therefore,
  $\tilde{\Phi}$ induces a homomorphism $\Phi \: E\to E'$ which makes the diagram commute.
    
  By looking at the corresponding Lie algebra maps,
  we see that if $f$ commutes with the other two maps of the butterflies, then so does 
  $\Phi$. That is, $\Phi$ is indeed a morphism of butterflies from $B$ to $B'$.
   
 \medskip  
  
 \noindent{\em Proof of} ($iii$). 
  We may assume that $H_0$ and $H_1$ are  connected and
  simply-connected  (see $\S$\ref{SS:definition}).
  In view of the previous part, we have to show that every butterfly $B \: \Lie\bbH \to \Lie\bbG$,
              $$\xymatrix@C=8pt@R=6pt@M=6pt{ \Lie H_1 \ar[rd]^{\kappa} \ar[dd]
                          & & \Lie G_1 \ar[ld]_{\iota} \ar[dd] \\
                            & E \ar[ld]^{\sigma} \ar[rd]_{\rho}  & \\
                                       \Lie H_0 & & \Lie G_0      }$$
  integrates to a butterfly $\Int B \: \bbH \to \bbG$. Let $\Int E$ be the simply-connected
  Lie group whose Lie algebra is $E$. Let $G$ be the kernel of $\Int\sigma \: \Int E \to
   H_0$. Since $\pi_i H_0=0$, $i=0,1,2$, an easy homotopy fiber exact sequence argument 
  implies that $G$ is connected and simply-connected. 
   
  We identify the Lie algebras of $G$ and $G_1$ via $\iota \: \Lie  G_1 \to E$ and regard them as 
  equal.  Since $G$ is simply-connected and $G_1$ is connected, we have a natural isomorphism
  $\bar{\iota} \: G_1 \to G/N$ for some discrete central subgroup $N \subseteq G$. 
  We claim that $N$ is a normal subgroup of $\Int E$. To prove this, we compare the 
  conjugation action of $\Int E$ on $G$ with the action of $\Int E$ on $G_1$ obtained via
  $\Int\rho \: \Int E \to  G_0$. (The latter is the integration of the Lie algebra homomorphism
  $\rho \: E \to \Lie G_0$.) The equivariance axiom of the butterfly for the map $\rho$,
  plus the fact that $\bar{\iota}^{-1}\circ\pr \: G \to G_1$ induces the 
  identity map on the Lie algebras, implies  (Lemma \ref{L:equivariant}) that 
  $\bar{\iota}^{-1}\circ\pr \: G \to G_1$ is $\Int E$-equivariant.
  Therefore, its kernel $N$ is invariant under the conjugation  action of $\Int E$. That is,
  $N \subseteq \Int E$ is normal.

  An argument similar to the one used in the previous part shows that the map 
  $\Int\rho \: \Int E \to G_0$ vanishes on  $N$. More precisely, repeat the same 
  argument with the diagram
    $$\xymatrix@C=36pt@R=18pt@M=4pt{ 
     &&\Int E \ar[d]\ar@/_2.5pc/ [ddd]_(0.3){\Int\rho} 
              \ar[rdd]^{\Int\sigma}  &&\\
      & G \ar[d]_{\alpha} \ar[ru]^{\delta} & (\Int E)/N \ar@{..>} [dd]^{\bar{\rho}}
        \ar[dr]_{\bar{\sigma}}  &  & \\
     0 \ar[r] & G_1 \ar[ur]_(0.3){\bar{\iota}}  |!{[uur];[dr]}\hole \ar[rd]_{\partial} & &  H_0 \ar[r]& 0 \\
     & & G_0  && 
     }$$
  
  Thus, we obtain an induced homomorphism $\bar{\rho} \: (\Int E)/N \to G_0$. Denote
  the map $(\Int E)/N \to H_0$ induced from $\Int\sigma$ by $\bar\sigma$. Collecting what we
  have so far, we obtain a partial butterfly diagram
                  $$\xymatrix@C=8pt@R=6pt@M=6pt{ 
                          & &  G_1 \ar[ld]_{\bar{\iota}} \ar[dd] \\
                            & (\Int E)/N \ar[ld]^{\bar{\sigma}} \ar[rd]_{\bar{\rho}}  & \\
                                       H_0 & &  G_0      }$$
   (Observe that applying $\Lie$ to this partial butterfly gives us back the corresponding 
   portion of the original butterfly $B$.) Finally, using  the fact that $H_1$ is connected and 
   simply-connected, we can complete the butterfly by integrating $\kappa$ to 
   $\bar{\kappa} \: H_1 \to  (\Int E)/N $. It is easily verified that the resulting diagram 
   satisfies the butterfly axioms -- this is the sought after butterfly $\Int B \: \bbH \to \bbG$. 
   The proof is complete.
 \end{proof}

Let $V \mapsto \Int V$ denote the functor which associates to a Lie algebra $V$ the
corresponding connected simply-connected Lie group. For 
a crossed-module in Lie algebras $\bbV=[V_1 \to V_0]$ we denote the corresponding 
crossed-module in Lie groups  by $\Int \bbV:=[\Int V_1 \to \Int V_0]$.

 \begin{cor}{\label{C:adjoint}}  
    The bifunctor $\Int \: \LieAlgXM \to \LieXM$ is left adjoint to the bifunctor
     $\Lie \: \LieXM \to \LieAlgXM$.
 \end{cor}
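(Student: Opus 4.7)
The plan is to deduce the adjunction directly from Theorem \ref{T:bifunctor}(iii). The key observation is that for any Lie algebra crossed-module $\bbV=[V_1\to V_0]$, the Lie crossed-module $\Int\bbV=[\Int V_1\to \Int V_0]$ is automatically $2$-connected: by construction $\Int V_0$ and $\Int V_1$ are both connected and simply-connected, which is precisely the characterization of $2$-connected Lie crossed-modules given in $\S$\ref{SS:definition}. Moreover, by the construction of the integration functor on Lie algebras, there is a canonical natural isomorphism $\eta_{\bbV}\:\bbV\risom \Lie\Int\bbV$ of Lie algebra crossed-modules.

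First I would define, for $\bbV\in\LieAlgXM$ and $\bbG\in\LieXM$, the comparison functor
\[
\Phi_{\bbV,\bbG}\: \LieXM(\Int\bbV,\bbG)\lra \LieAlgXM(\bbV,\Lie\bbG)
\]
as the composite of $\Lie\:\LieXM(\Int\bbV,\bbG)\to \TwoTermB(\Lie\Int\bbV,\Lie\bbG)$ with the functor $\eta_{\bbV}^{*}$ induced by precomposition with $\eta_{\bbV}$. Since $\Int\bbV$ is $2$-connected, Theorem \ref{T:bifunctor}(iii) says $\Lie$ is an equivalence of hom-groupoids, and $\eta_{\bbV}^{*}$ is an equivalence because $\eta_{\bbV}$ is an isomorphism; hence $\Phi_{\bbV,\bbG}$ is an equivalence. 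Also, by Theorem \ref{T:bifunctor}, $\Lie$ factors through $\LieAlgXM\subset\TwoTermB$, so the target is indeed $\LieAlgXM(\bbV,\Lie\bbG)$.

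Next I would verify naturality: in $\bbG$, naturality is immediate because $\Lie$ is a bifunctor on $\LieXM$. In $\bbV$, given a morphism (butterfly) $\bbV\to\bbV'$ in $\LieAlgXM$, I would use Theorem \ref{T:bifunctor}(iii) applied to the $2$-connected source $\Int\bbV$ to lift it (uniquely up to a canonical $2$-cell) to a butterfly $\Int\bbV\to\Int\bbV'$; this is what makes $\Int$ a bifunctor and automatically gives naturality of $\Phi$ in the first variable up to coherent $2$-isomorphism. The unit of the adjunction is $\eta_{\bbV}\:\bbV\to\Lie\Int\bbV$, and the counit $\epsilon_{\bbG}\:\Int\Lie\bbG\to \bbG$ is the butterfly corresponding under $\Phi_{\Lie\bbG,\bbG}^{-1}$ to $\id_{\Lie\bbG}$. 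The triangle identities hold up to canonical $2$-isomorphism by essentially unique lifting, again invoking part (iii) of Theorem \ref{T:bifunctor}.

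The main subtlety is that this is a biadjunction between bicategories rather than a strict $1$-categorical adjunction, so one must keep track of coherent $2$-cells rather than equalities throughout. The essential mathematical content, however, is entirely contained in Theorem \ref{T:bifunctor}(iii); once one has the equivalence of hom-groupoids for $2$-connected sources and the observation that every $\Int\bbV$ is $2$-connected, the adjunction formalism is routine.
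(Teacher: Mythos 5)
Your proposal is correct and follows essentially the same route as the paper, whose entire proof is the single line ``This follows immediately from Theorem \ref{T:bifunctor}''; your argument is precisely the intended unpacking, namely that $\Int\bbV$ is always $2$-connected so part (iii) gives the equivalence of hom-groupoids $\LieXM(\Int\bbV,\bbG)\simeq\LieAlgXM(\bbV,\Lie\bbG)$, natural in both variables.
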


 \begin{proof}
   This follows immediately from Theorem \ref{T:bifunctor}.
 \end{proof}

 \begin{rem}{\label{L:extendadjunction}}
  Presumably, the adjunction of Corollary \ref{C:adjoint} can be
  extended to
                  $$\xymatrix@C=-24pt@R=-10pt@M=14pt{
            \Int \:   \LieAlgXM \ar@{^(->} [dd]
                              & \leftrightharpoons & \LieXM  : \Lie \ar@{^(->}  [dd]   \\
                                           &     & \\
                        \Int \: \TwoTerm  & \leftrightharpoons & \mathbf{Lie2Gp} : \Lie      }$$   
  Here, by $\mathbf{Lie2Gp}$ we mean the 2-category of Lie group stacks. The inclusion on the right
  is given by the fully faithful bifunctor 
         $$\LieXM \to \mathbf{Lie2Gp},$$
         $$[G_1 \to G_0] \mapsto [G_0/G_1].$$
 \end{rem}
 
\section{Appendix: functorial $n$-connected covers for $n\geq 3$}{\label{A:1}}

Axioms ($\bigstar$1-4)  discussed in $\S$\ref{S:Connected}-
\ref{S:Functoriality} have a certain iterative property which we would like to point out in
this appendix. To simplify the notation, we will replace $n-1$ by $m$. 

We saw in $\S$\ref{S:Connected}-
\ref{S:Functoriality} that, for  $m\leq 1$, the standard choices for the 
$m$-connected cover functors $(-)\ang{m}$ on the category of topological groups automatically
satisfy ($\bigstar$1-4). Using this we constructed our $m$-connected cover
bifunctor $(-)\ang{m}$ on the bicategory of topological (or Lie)  crossed-modules
for $m\leq 2$. It can be
shown that these bifunctors again satisfy (a categorified version) of
($\bigstar$1-4).
 
A magic seems to have occurred here: we managed to raise
$m$ from 1 to 2! This may sound contradictory, as we do not to expect to have a functorial
$2$-connected cover functor $(-)\ang{2}$ on the category of topological groups which satisfies
either the pullback property ($\bigstar$2) or the adjunction property ($\bigstar$4).  
 
This apparent contradiction is explained by noticing  that our definition of
$(-)\ang{2}$ indeed yields a crossed-module, even if the input is a topological group. More
precisely, for  a topological group $G$, we get
        $$G\ang{2}=[\widetilde{L^o} \to G'],$$
where $q \: G' \to G$ is a choice of a 2-connected replacement for $G$ and $L=\ker q$.
(For example, take $G'=\operatorname{Path}_1(G)$, the space of paths starting at $1$;
see Remark \ref{R:contractible}.) It is also 
interesting to note that, for different choices of 2-connected 
replacement $q \: G' \to G$, the resulting
crossed-modules $G\ang{2}$ are canonically (up to a unique isomorphism of butterflies) 
equivalent.

The upshot of this discussion is that, 2-connected covers of topological groups seem to more 
naturally exist as topological crossed-modules. Another implication is that  we can now iterate
the process. For example, we get a functorial
construction of a 3-connected cover $G\ang{3}$ of a topological group $G$ as a
2-crossed-module, and this (essentially unique) construction enjoys a categorified
version of ($\bigstar$1-4).

This seems to hint at the following general philosophy: for any $m\leq k+1$, 
there should be a (essentially unqiue) definition of $m$-connected covers $\bbG\ang{m}$ for 
topological $k$-crossed-modules $\bbG$ which enjoys a categorified version of ($\bigstar$1-4).

We point out that the notion of  $k$-crossed-module exists for $k\leq 3$
(see \cite{Conduche} and \cite{ArKuUs}). The butterfly construction of the tricategory of 
$2$-crossed-modules (and week morphisms) is being developed in \cite{ButterflyIII}. For higher
values of $k$ the simplicial approach is perhaps a better alternative, as $k$-crossed-modules
tend to become immensly complicated as $k$ increases.

\begin{rem}
  The above discussion applies to the case where we 
  replace topological groups with infinite dimensional
  Lie groups. 
\end{rem}

\providecommand{\bysame}{\leavevmode\hbox
to3em{\hrulefill}\thinspace}
\providecommand{\MR}{\relax\ifhmode\unskip\space\fi MR }
\providecommand{\MRhref}[2]{%
  \href{http://www.ams.org/mathscinet-getitem?mr=#1}{#2}
} \providecommand{\href}[2]{#2}

\end{document}